\journal{Journal of Computational and Applied Mathematics}
\renewcommand{\vec}[1]{{\ensuremath{\boldsymbol{\mathrm #1}}}}
\newcommand{\ten}[1]{\ensuremath{\boldsymbol{\mathsf{#1}}}}
\newcommand{\vdot}{\boldsymbol{\mathsf{\ensuremath\cdot}}}
\newcommand{\del}{\ensuremath{\nabla}}
\newcommand{\deld}{\ensuremath{\del\vdot}}
\newcommand{\lrp}[1]{\left( #1 \right)}
\newcommand{\LRP}[1]{\bigl( #1 \bigr)}
\newcommand{\FF}{{\mathrm{ff}}}
\newcommand{\PM}{{\mathrm{pm}}}
\newcommand{\M}{\displaystyle M_1^{1,\mathrm{bl}}}
\newcommand{\N}{N_1^{\mathrm{bl}}}
\newcommand{\ex}{\text{ex}}
\newcommand{\e}{\text{e}}
\theoremstyle{theorem}
\newtheorem{theo}{Theorem}[section]
\theoremstyle{definition}
\begin{document}

\begin{frontmatter}



\title{Optimized Schwarz method for the Stokes--Darcy problem with generalized interface conditions}

\author[1]{Paula Strohbeck}
\ead{Paula.Strohbeck@ians.uni-stuttgart.de}

\author[2]{Marco Discacciati}
\ead{M.Discacciati@lboro.ac.uk}

\author[1]{Iryna Rybak\corref{cor1}}
\ead{iryna.rybak@ians.uni-stuttgart.de}

\affiliation[1]{
    organization={Institute of Applied Analysis and Numerical Simulation, University of Stuttgart},   
    addressline={Pfaffenwaldring 57},
    city={Stuttgart},
    postcode={70569},
    country={Germany}}

\affiliation[2]{
    organization={Department of Mathematical Sciences, Loughborough University},
    addressline={Epinal Way},
    city={Loughborough},
    postcode={LE11~3TU},
    country={United Kingdom}}

\cortext[cor1]{Corresponding author}
\begin{abstract}
Due to their wide appearance in environmental settings as well as  industrial and medical applications, the Stokes--Darcy problems with different sets of interface conditions establish an active research area in the community of mathematical modelers and computational scientists. 
For numerical simulation of such coupled problems in applications, robust and efficient computational algorithms are needed. 
In this work, we consider a generalization of the Beavers--Joseph interface condition recently developed using homogenization and boundary layer theory.  This extension is applicable not only for the parallel flows to the fluid--porous interface as its predecessor, but also for arbitrary flow directions. To solve the Stokes--Darcy problem with these generalized interface conditions efficiently, we develop and analyze a Robin--Robin domain decomposition method using Fourier analysis to identify optimal weights in the Robin interface conditions. We study efficiency and robustness of the proposed method and provide numerical simulations which confirm the obtained theoretical results. 
\end{abstract}



\begin{keyword}
Stokes equations \sep Darcy's law \sep interface conditions \sep Robin--Robin domain decomposition method



\end{keyword}

\end{frontmatter}


\section{Introduction}
Stokes--Darcy problems with various sets of interface conditions are widely used in the literature to describe fluid flow in coupled systems containing a free-fluid domain in contact with a porous medium. The most famous interface condition is the Beavers--Joseph condition on the tangential velocity component~\cite{Beavers_Joseph_67}. It relates the jump in the tangential velocity to the shear stress across the fluid--porous interface. This condition is often used in the form modified by Saffman~\cite{Saffman_71} and establishes the link between the tangential velocity in the fluid and  the shear stress at the interface, thus neglecting the contribution of the seepage velocity. However, both the Beavers--Joseph and the Beavers--Joseph--Saffman conditions have a limited applicability and are valid only for flows that are parallel or perpendicular to the fluid--porous interface~\cite{Eggenweiler_Rybak_20}. 

There exists several generalizations of the Beavers--Joseph condition, which could be applicable to arbitrary flow directions, e.g.,~\cite{Angot_etal_17, Zampogna_Bottaro_16, Ahmed-bottaro-24, Eggenweiler_Rybak_21, Lacis_Bagheri_17, Carraro_etal_15, Sudhakar_21, Naqvi_Bottaro_2021, Lacis_etal_20}.  
However, some of the coupling strategies are purely theoretical, and include  coefficients which still need to be determined. In this work, we focus on the generalized interface conditions recently developed in~\cite{Eggenweiler_Rybak_21} by means of the homogenization and boundary layer theory. The advantage of these conditions is their applicability for flow systems with arbitrary flow directions to the fluid--porous interface and the ability to compute all the physical parameters appearing in the coupling conditions numerically using information on the pore geometry. These advantages, in comparison to the other coupling conditions available in the literature, are demonstrated, e.g.,~in~\cite{Eggenweiler_Rybak_21, Strohbeck-Eggenweiler-Rybak-23}. The well-posedness of the Stokes--Darcy problem with these generalized interface conditions is proved in our previous work~\cite{Eggenweiler_Rybak_Discacciati_21}. There, the coupled system was studied and solved numerically in the monolithic way. However, for numerical simulation of applications efficient numerical algorithms are of great interest. 

The Stokes--Darcy systems can be decoupled in a natural way at the sharp fluid--porous interface and thus non-overlapping domain decomposition methods can be applied to solve them efficiently. In this case, the original coupled problem is reduced to two smaller separate problems which can be solved independently using appropriate numerical methods in each subdomain, e.g.~\cite{Discacciati_Quarteroni_09, Discacciati_Gerardo-Giorda_18, Chen_etal_2011, He_etal_2015, Liu_etal_2022, Vassilev_etal_2014, Discacciati_etal_2007, Caiazzo_etal_2014, Liu_etal_2021,Quarteroni_Valli_1999}. 
It is well known that the classical Dirichlet--Neumann methods for the Stokes--Darcy problem with the Beavers--Joseph interface condition may suffer from slow convergence in case when the values of fluid viscosity and permeability are small~\cite{Quarteroni_Valli_1999, Discacciati_2004}. A similar behaviour has been observed also with FETI and BDD methods \cite{Galvis:2007:FBP,Galvis:2006:BDD}. In contrast, domain decomposition methods based on Robin--Robin interface conditions have showed better performance as they guarantee a more robust behaviour with respect to the physical parameters. Initial contributions in this direction can be found, e.g., in  \cite{Discacciati_etal_2007,Chen_etal_2011,Cao_etal_2011,Caiazzo_etal_2014,He_etal_2015}, and also \cite{Feng:2012:AMC,Cao:2014:PNI} for the time-dependent Stokes--Darcy problem. However, the key aspect in Robin--Robin methods is the choice of the weighting coefficients in the Robin interface conditions that may lead to poor performance if not carefully selected. Typically, these coefficients are optimized using Fourier analysis in simplified geometrical settings. The resulting iterative methods are referred to as optimized Schwarz methods in the literature (see, e.g., \cite{Gander:2006:OSM}), and they have been successfully applied in various cases (see, e.g., \cite{LGG:SISC:2009,Gander_XU:2016:OSM,gander2019heterogeneous,Gander:2019:SIREV,Chen:2021:OSM,Gigante:2021:OSM,Gigante:2020:OSM}). Robin--Robin methods with optimal coefficients have been studied for the Stokes--Darcy problem with the Beavers--Joseph--Saffman interface condition in the steady and time-dependent cases \cite{Discacciati_Gerardo-Giorda_18,Gander_Vanzan_2020,gander:MOSM,Discacciati:2024:IMAJNA}. 
The objective of this work is to extend the Robin--Robin domain decomposition method proposed in~\cite{Discacciati_Gerardo-Giorda_18} to the Stokes--Darcy problem with the generalized coupling conditions, to determine optimal parameters in the transmission conditions and to analyze the performance of the developed method. 

The manuscript is organized as follows. In section~\ref{sec:model}, the coupled flow model with the generalized interface conditions is formulated. Section~\ref{sec:RR} is devoted to the development and theoretical analysis of the Robin--Robin method. In section~\ref{sec:numerics}, numerical simulation results are provided and the efficiency and robustness of the developed algorithm is studied. Finally, discussion and future work follow in section~\ref{sec:discussion}.

\section{Problem formulation}\label{sec:model}

\subsection{Coupled Stokes--Darcy flow model}

In this work, we consider steady-state incompressible non-inertial flows ($Re \ll 1$) in the free-flow domain $\overline \Omega_\FF \subset \mathbb{R}^2$ and single-fluid-phase flows in the adjacent fully saturated and non-deformable porous medium $\overline \Omega_\PM \subset \mathbb{R}^2$. The whole flow system is assumed to be isothermal. The interface between the two flow domains $\Gamma = \overline \Omega_\FF \cap \overline \Omega_\PM$ is supposed to be flat and does not allow any storage and transport of mass and momentum.

The dimensionless Stokes equations describe fluid flow in the free-flow region
\begin{align}
\deld \vec{v}_\FF = 0, \quad - \deld \ten T(\vec v_\FF,p_\FF) = \vec f_\FF \qquad \text{in} \;\; \Omega_\FF, \label{eq:Stokes}
\end{align}
where $\vec v_\FF$ and $p_\FF$ are the fluid velocity and pressure, respectively, $\ten T(\vec v_\FF,p_\FF) = \nabla \vec v_\FF - p_\FF \ten I$ is the stress tensor,  $\ten I$ is the identity tensor, and $\vec f_\FF$ is the body force.

In the porous-medium domain, the Darcy flow equations 
\begin{align}\label{eq:PM-Darcy}
\deld \vec v_\PM = f_\PM,  \quad  \vec v_\PM = -\ten K \, \del p_\PM \qquad \text{in} \;\; 
\Omega_\PM
\end{align}
are applied, where $\vec v_\PM$ is the seepage velocity, $p_\PM$ is the fluid pressure, $\ten K$ is the permeability tensor, which is symmetric positive definite and bounded, and $f_\PM$ is the source term. In this paper, we consider isotropic ($\ten{K} = \kappa \ten{I}$) and orthotropic porous media ($\ten{K} = \text{diag}(\kappa_{11},\kappa_{22})$) with $\kappa$, $\kappa_{11}$, $\kappa_{22}>0$.

On the external boundary of the free-flow domain $\partial \Omega_\FF \setminus\Gamma$ and the porous-medium domain $ \partial 
\Omega_\PM \setminus\Gamma$, suitable boundary conditions are set to ensure the well-posedness of the problem. They are described in section~\ref{sec:numerics} for the considered examples.

\subsection{Interface conditions}\label{sec:interface-conditions}

For coupling the Stokes--Darcy problem~\eqref{eq:Stokes} and~\eqref{eq:PM-Darcy}, we consider the generalized interface conditions developed in~\cite{Eggenweiler_Rybak_21}, which consist of the conservation of mass across the fluid--porous interface~\eqref{eq:IC-mass}, an extension of the balance of normal forces~\eqref{eq:IC-momentum} and a generalization of the Beavers--Joseph condition~\eqref{eq:IC-tangential}:
\begin{align}\label{eq:IC-mass}
\vec v_\FF \vdot \vec n &= \vec v_\PM \vdot \vec n && \text{on} \;\Gamma,
\\[1ex]
\label{eq:IC-momentum}
-\vec n \vdot \ten T\lrp{\vec v_\FF, p_\FF} \vec n + {N_s^{\mathrm{bl}} \, \vec\tau \vdot \ten T\lrp{\vec v_\FF, p_\FF} \vec n} &= p_\PM
&&  \text{on} \; \Gamma,
\\
\frac{1}{\varepsilon (\vec N^\mathrm{bl} \vdot \vec \tau)} \vec v_\FF \vdot \vec\tau +\vec\tau \vdot \ten T\lrp{\vec v_\FF, p_\FF} \vec n &= {-} \frac{\varepsilon}{\vec N^{\mathrm{bl}} \vdot \vec \tau } ( \ten M^{\mathrm{bl}}\nabla p_\PM )\vdot \vec \tau  &&
 \text{on} \; \Gamma, \label{eq:IC-tangential}
\end{align}
with the unit normal $\vec n$ pointing out from the free-flow domain and the tangential vector $\vec \tau$ on the fluid--porous interface $\Gamma$. The scale separation parameter is $\varepsilon \ll 1$.

The boundary layer coefficients $N_s^{\mathrm{bl}} \in \mathbb{R}$, $\vec N^{\mathrm{bl}} = (N_1^{\mathrm{bl}}, N_2^{\mathrm{bl}})^\top \in \mathbb{R}^2$ and $\displaystyle \ten M^{\mathrm{bl}} = (M_i^{j,\mathrm{bl}})_{i,j=1,2} \in\mathbb{R}^{2 \times 2}$ can be computed using homogenization and boundary layer theory using information on the pore geometry following~\cite{Eggenweiler_Rybak_21}. For isotropic and orthotropic porous media considered in this work, we get $N_s^{\mathrm{bl}} = 0$, thus the second term in equation~\eqref{eq:IC-momentum} disappears.
Note that $\displaystyle \ten M^{\mathrm{bl}}$ can be interpreted as the interfacial permeability tensor~\cite{Strohbeck-Eggenweiler-Rybak-23}.
Moreover, differently from the original formulation in~\cite{Eggenweiler_Rybak_21} and also our previous work~\cite{Eggenweiler_Rybak_Discacciati_21}, the boundary layer constants have opposite signs, i.e., $\N > 0$ and $\M > 0$. This is due to opposite right-hand sides in the boundary layer problems. All the other components of $\vec N^{\mathrm{bl}}$ and $\ten M^{\mathrm{bl}}$ are zero for the horizontal interface $\Gamma$ used in this work.
 
\section{Robin--Robin method}\label{sec:RR}

In this section, we provide the formulation of the Robin--Robin domain decomposition method for the Stokes--Darcy problem with the generalized interface conditions and conduct convergence analysis using the Fourier transform. 

\subsection{Formulation of the Robin--Robin method}

In this section, we derive the Robin--Robin type domain decomposition method for the Stokes--Darcy problem with the generalized interface conditions~\eqref{eq:Stokes}--\eqref{eq:IC-tangential}.
Let $\alpha_\FF > 0$ and \mbox{$\alpha_\PM > 0$} be two parameters. Linear combinations of the interface equations \eqref{eq:IC-mass} and \eqref{eq:IC-momentum} with coefficients $(-\alpha_\FF,1)$ and $(\alpha_\PM,1)$ result in the two Robin interface conditions on $\Gamma$:
\begin{align}
    -\alpha_\FF \vec v_\FF \vdot \vec n - \vec n \vdot \ten T\lrp{\vec v_\FF, p_\FF} \vec n &= -\alpha_\FF \vec v_\PM \vdot \vec n +  p_\PM, \\
    \alpha_\PM \vec v_\PM \vdot \vec n + p_\PM &=\alpha_\PM \vec v_\FF \vdot \vec n - \vec n \vdot \ten T\lrp{\vec v_\FF, p_\FF} \vec n.
\end{align}
Using these conditions, we formulate a Robin--Robin type algorithm where we equivalently rewrite Darcy's flow equations~\eqref{eq:PM-Darcy} as a second-order elliptic problem for the porous-medium pressure $p_\PM$ as follows. Given the initial Darcy pressure $p_\PM^{(0)}$, find the fluid velocity $\vec v^{(m)}_\FF$ and the pressures $p^{(m)}_\FF$ and $p^{(m)}_\PM$ in the free-flow and porous-medium domains
\begin{equation}\label{eq:stokesStep}
\begin{array}{rl}
- \deld \ten T \LRP{\vec v^{(m)}_\FF,p^{(m)}_\FF} = \vec f_\FF, \quad \deld \vec{v}^{(m)}_\FF = 0 & \quad
\text{in} \;\; \Omega_\FF,\\[2mm]
\displaystyle
\frac{1}{\varepsilon (\vec N^\mathrm{bl} \vdot \vec \tau)} \vec v^{(m)}_\FF \vdot \vec\tau {+} \vec\tau \vdot \ten T\LRP{\vec v^{(m)}_\FF, p^{(m)}_\FF} \vec n 
= {-} \frac{\varepsilon}{\vec N^{\mathrm{bl}} \vdot \vec \tau } \LRP{\ten M^{\mathrm{bl}}\nabla p^{(m-1)}_\PM \vdot \vec \tau} & \quad 
 \text{on} \; \Gamma, \\[4.5mm]
 -\alpha_\FF \vec v^{(m)}_\FF \vdot \vec n - \vec n \vdot \ten T\LRP{\vec v^{(m)}_\FF, p^{(m)}_\FF}{\vec n}
 = \; \alpha_\FF \ten K \nabla p^{(m-1)}_\PM \vdot \vec n +  p^{(m-1)}_\PM & \quad 
 \text{on} \; \Gamma,
\end{array}
\end{equation}
and
\begin{equation}\label{eq:darcyStep}
    \begin{array}{rl}
    \displaystyle
       - \nabla \vdot \LRP{ \ten K \nabla p^{(m)}_\PM } = f_\PM  & \quad \text{in} \; \Omega_\PM,  \\[4.5mm]
-\alpha_\PM \ten K \nabla p^{(m)}_\PM \vdot \vec n + p^{(m)}_\PM \; = \; \alpha_\PM \vec v^{(m)}_\FF \vdot \vec n - \vec n \vdot \ten T \LRP{\vec v^{(m)}_\FF, p^{(m)}_\FF} \vec n  & \quad \text{on} \; \Gamma,
    \end{array}
\end{equation}
for the iteration $m \geq 1$ until convergence. In algorithm~\eqref{eq:stokesStep}--\eqref{eq:darcyStep}, suitable boundary conditions are set on the external boundary of the domain $\LRP{\partial \Omega_\FF \cup \partial \Omega_\PM}\backslash \Gamma$.

\subsection{Analysis of the method}

For the analysis, we consider the approach used in~\cite{Discacciati_Gerardo-Giorda_18,Discacciati:2024:IMAJNA} for the Beavers--Joseph--Saffman coupling condition on the fluid--porous interface. However, since the generalized conditions are more complex in comparison to the Beavers--Joseph--Saffman condition, the previous results do not straightforwardly apply to the case of arbitrary flow directions considered in this work. Therefore, further extensions are needed.

We consider a geometrical setting with the flow domains $\Omega_\FF = \{ (x,y) \in \mathbb{R}^2 \, : \, y>0\}$ and $\Omega_\PM = \{ (x,y) \in \mathbb{R}^2 \, : \, y<0\}$ separated by the horizontal interface $\Gamma = \{ (x,y) \in \mathbb{R}^2 \, : \, y=0\}$. The unit normal and tangential vectors at the interface are $\vec n = (0,-1)^\top$ and $\vec \tau = (1,0)^\top$, respectively. Additionally, since we are interested in studying the behavior of the error and all equations are linear, without loss of generality, we can set the source terms $\vec{f}_\FF$ and $f_\PM$ in~\eqref{eq:stokesStep} and~\eqref{eq:darcyStep} equal to zero. Under these assumptions, algorithm~\eqref{eq:stokesStep}--\eqref{eq:darcyStep} can be written as follows
\begin{align}
\label{eq:Stokes-momentum-simplified-1}
        -\LRP{\partial_{xx} v^{(m)}_{1,\FF} + \partial_{yy} v^{(m)}_{1,\FF}}  + \partial_x p^{(m)}_{\FF}  &= 0 && \text{in }\mathbb{R} \times (0, \infty), \\
\label{eq:Stokes-momentum-simplified-2}
         -\LRP{\partial_{xx} v^{(m)}_{2,\FF} + \partial_{yy} v^{(m)}_{2,\FF}}
 +  \partial_y p^{(m)}_{\FF}
  &= 0 && \text{in }\mathbb{R} \times (0, \infty),\\
\label{eq:Stokes-divergence-simplified}
    \partial_x  v^{(m)}_{1,\FF} + \partial_y v^{(m)}_{2,\FF} &= 0 && \text{in }\mathbb{R} \times (0, \infty), \\
\label{eq:generalised-BJ}
    \frac{1}{\varepsilon N_1^{\mathrm{bl}}}  v^{(m)}_{1,\FF} -\partial_y v^{(m)}_{1,\FF} 
    = - \frac{\varepsilon}{N_1^{\mathrm{bl}}} M_1^{1,\mathrm{bl}} &\partial_x p^{(m-1)}_\PM  && \text{on } \mathbb{R} \times \{0\},\\
\label{eq:IC-Stokes}
    \alpha_\FF  v^{(m)}_{2,\FF} - \partial_y v^{(m)}_{2,\FF} + p^{(m)}_{\FF} = -\alpha_\FF \kappa_{22} \partial_y p^{(m-1)}_\PM & + p^{(m-1)}_\PM && \text{on } \mathbb{R} \times \{0\},
\end{align}
and 
\begin{align}
\label{eq:Darcy}
    -\LRP{\kappa_{11} \partial_{xx}  p^{(m)}_\PM + \kappa_{22} \partial_{yy} p^{(m)}_\PM} &= 0 &&\text{in } \mathbb{R} \times (0, \infty), \\[2mm]
\label{eq:IC-Darcy}
    \alpha_\PM  \kappa_{22} \partial_y p^{(m)}_\PM +p^{(m)}_\PM 
    =-\alpha_\PM v^{(m)}_{2,\FF} - &\partial_y v^{(m)}_{2,\FF} + p^{(m)}_{\FF}  &&\text{on }\mathbb{R} \times \{0\}.
\end{align} 

We conduct the convergence analysis in the Fourier space and use the Fourier transform in the direction tangential to the interface $\Gamma$ (which corresponds to the $x$ variable in our simplified geometrical setting):
\[
  \mathcal{F}: w(x,y) \mapsto \widehat{w}(y,k) = \int_{\mathbb{R}} \e^{-ikx} w(x,y) \, dx\,,
\]
where $k$ is the frequency variable.
At the fluid--porous interface $\Gamma$, we define the error reduction factor using the relation
\begin{equation}
\label{eq:reduction-factor}
    \left|\widehat{p}_\PM^{\,(m)}(0,k)\right| = \rho(\alpha_\FF, \alpha_\PM, k) \, \left|\widehat{p}^{\,(m-1)}_\PM(0,k)\right|.
\end{equation}

\begin{theo}\label{thm:3-1}
The error reduction factor $\rho(\alpha_\FF,\alpha_\PM, k)$ of the Robin--Robin algorithm~\eqref{eq:Stokes-momentum-simplified-1}--\eqref{eq:IC-Darcy} is independent of the iteration $m$, and it can be expressed as
\begin{equation}\label{eq:reduction-factor-def}
    \rho(\alpha_\FF, \alpha_\PM, k)= \left|\rho_1(\alpha_\FF,\alpha_\PM, k) - \rho_2(\alpha_\FF,\alpha_\PM, k)\right|,
\end{equation}
    with
    \begin{align}
    \label{eq:rho-1}
    \rho_1(\alpha_\FF,\alpha_\PM, k) &= 
    \frac{\displaystyle \left( 1 - \alpha_\FF \sqrt{\kappa_{11}\kappa_{22}}\, |k|\right) \left( -\alpha_\PM + |k| \, \frac{2 + 3 \varepsilon N_1^{\mathrm{bl}}|k|}{1 + 2 \varepsilon N_1^{\mathrm{bl}}|k|}\right) }{\displaystyle \left( 1 + \alpha_\PM \sqrt{\kappa_{11}\kappa_{22}}\, |k|\right) \left( \phantom{-}\alpha_\FF + |k| \, \frac{2 + 3 \varepsilon N_1^{\mathrm{bl}}|k|}{1 + 2 \varepsilon N_1^{\mathrm{bl}}|k|}\right)}, \\ 
\rho_2(\alpha_\FF,\alpha_\PM, k) &= 
    (\alpha_\FF + \alpha_\PM)
    \frac{\displaystyle M_1^{1,\mathrm{bl}} \frac{\varepsilon^2 k^2}{1+2\varepsilon N_1^{\mathrm{bl}}|k|}}{\displaystyle \left( 1 + \alpha_\PM \sqrt{\kappa_{11}\kappa_{22}}\, |k|\right) \left( \alpha_\FF + |k| \, \frac{2 + 3 \varepsilon N_1^{\mathrm{bl}}|k|}{1 + 2 \varepsilon N_1^{\mathrm{bl}}|k|}\right)}. \label{eq:rho-2}
\end{align}
\end{theo}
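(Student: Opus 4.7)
My strategy is to apply the Fourier transform $\mathcal{F}$ in the $x$-direction to the full iteration \eqref{eq:Stokes-momentum-simplified-1}--\eqref{eq:IC-Darcy}. This reduces the Stokes system (in $y>0$) and the Darcy pressure equation (in $y<0$) to linear ODEs in $y$ parametrized by the frequency $k$, with algebraic coupling at $y=0$ through the Fourier transforms of the three interface conditions. The key observation is that, once closed-form half-plane solutions are substituted into the interface conditions, the recursion $\widehat{p}_\PM^{(m)}(0,k) \mapsto \widehat{p}_\PM^{(m-1)}(0,k)$ becomes purely multiplicative, which will give the $m$-independence of $\rho$ and the explicit formula \eqref{eq:reduction-factor-def}.

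First I would solve the transformed Stokes problem in the upper half-plane, requiring decay as $y\to+\infty$. Taking the divergence of the momentum equation yields $\Delta \widehat{p}_\FF^{(m)}=0$, so $\widehat{p}_\FF^{(m)}(y,k)=A^{(m)}(k)\,\e^{-|k|y}$. Substituting into the $y$-momentum equation and solving (noting that $\e^{-|k|y}$ is already a homogeneous solution, which forces a resonant term $y\,\e^{-|k|y}$) gives
\begin{equation*}
\widehat{v}_{2,\FF}^{(m)}(y,k)=B^{(m)}(k)\,\e^{-|k|y}+\tfrac{1}{2}A^{(m)}(k)\,y\,\e^{-|k|y},
\end{equation*}
and the divergence constraint $ik\,\widehat{v}_{1,\FF}^{(m)}+\partial_y \widehat{v}_{2,\FF}^{(m)}=0$ then determines $\widehat{v}_{1,\FF}^{(m)}$ in terms of $A^{(m)},B^{(m)}$. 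Analogously, the transformed Darcy equation $\kappa_{22}\partial_{yy}\widehat{p}_\PM^{(m)}=\kappa_{11}k^2\widehat{p}_\PM^{(m)}$ in $y<0$, with decay as $y\to-\infty$, yields
\begin{equation*}
\widehat{p}_\PM^{(m)}(y,k)=C^{(m)}(k)\,\e^{\lambda y},\qquad \lambda=\sqrt{\kappa_{11}/\kappa_{22}}\,|k|,
\end{equation*}
so that $\widehat{p}_\PM^{(m)}(0,k)=C^{(m)}$ and $\partial_y\widehat{p}_\PM^{(m)}(0,k)=\lambda\,C^{(m)}$. There are thus three unknowns per iteration: $A^{(m)}, B^{(m)}, C^{(m)}$.

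Next I would evaluate the three interface conditions at $y=0$. Conditions \eqref{eq:generalised-BJ} and \eqref{eq:IC-Stokes} (the two driving the Stokes subproblem) produce a $2\times 2$ linear system for $A^{(m)},B^{(m)}$, whose right-hand side depends only on $C^{(m-1)}$ through the Darcy data $\widehat{p}_\PM^{(m-1)}(0,k)=C^{(m-1)}$, $\partial_y\widehat{p}_\PM^{(m-1)}(0,k)=\lambda C^{(m-1)}$, and $ik\,\widehat{p}_\PM^{(m-1)}(0,k)$; solving gives $A^{(m)},B^{(m)}$ as explicit linear functions of $C^{(m-1)}$. Substituting these into the Fourier transform of \eqref{eq:IC-Darcy} and using $\kappa_{22}\lambda=\sqrt{\kappa_{11}\kappa_{22}}\,|k|$ expresses the left-hand side $(1+\alpha_\PM\sqrt{\kappa_{11}\kappa_{22}}\,|k|)\,C^{(m)}$ as a multiple of $C^{(m-1)}$. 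Taking the absolute value of the resulting scalar multiplier yields $\rho(\alpha_\FF,\alpha_\PM,k)$, independent of $m$, and the splitting \eqref{eq:reduction-factor-def} arises naturally: the part coming from the homogeneous transmission data (pressure and normal flux) assembles into $\rho_1$, while the part coming from the $M_1^{1,\mathrm{bl}}\partial_x p_\PM^{(m-1)}$ forcing of the generalized Beavers--Joseph condition assembles into $\rho_2$ (producing the $\varepsilon^2 k^2$ factor through $(ik)^2$ acting on the decaying exponentials).

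The main obstacle will be the algebra in the $2\times 2$ Stokes solve: the rational factor $(2+3\varepsilon N_1^{\mathrm{bl}}|k|)/(1+2\varepsilon N_1^{\mathrm{bl}}|k|)$ in $\rho_1$ and the normalization $1+2\varepsilon N_1^{\mathrm{bl}}|k|$ in $\rho_2$ must emerge from combining the resonant $y\,\e^{-|k|y}$ terms in $\widehat{v}_{2,\FF}$ with the $\frac{1}{\varepsilon N_1^{\mathrm{bl}}}\widehat{v}_{1,\FF}-\partial_y\widehat{v}_{1,\FF}$ operator, and care is required to keep track of the factors of $i$ and of $|k|$ versus $k$ so that the final expression is manifestly real. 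Once this determinant and its cofactors are computed cleanly, the stated closed-form expressions for $\rho_1$ and $\rho_2$ follow directly.
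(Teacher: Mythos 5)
Your plan follows essentially the same route as the paper's proof: Fourier transform in $x$, decaying exponential solutions for $\widehat{p}_\FF$, $\widehat{p}_\PM$ and the resonant $y\,\e^{-|k|y}$ term in $\widehat{v}_{2,\FF}$, elimination of the $\widehat{v}_{1,\FF}$ coefficient via the divergence constraint, and then the three interface conditions closing a purely multiplicative recursion for the Darcy interface pressure, with the generalized Beavers--Joseph forcing generating $\rho_2$. The only quibble is cosmetic: the $\varepsilon^2 k^2$ in $\rho_2$ arises from one $\varepsilon\,ik$ in the transformed condition \eqref{eq:FT-BJ} combined with a second factor $\varepsilon k$ from normalizing by the Beavers--Joseph operator, not from $(ik)^2$ acting on the exponentials, but this does not affect the validity of the plan.
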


\begin{proof} 
Computing the divergence of the momentum equations~\eqref{eq:Stokes-momentum-simplified-1},~\eqref{eq:Stokes-momentum-simplified-2} written in vectorial form, using the incompressibility condition~\eqref{eq:Stokes-divergence-simplified} and multiplying it by $-1$, we get
\[-\LRP{\partial_{xx} p^{(m)}_\FF + \partial_{yy} p^{(m)}_\FF} = 0,\]
yielding the Fourier transform
\begin{equation}
\label{FT-pff}
    -\partial_{yy} \widehat{p}^{\,(m)}_{\FF} + k^2 \widehat{p}^{\,(m)}_\FF = 0 \qquad \text{in } (0, \infty).
\end{equation}
The solution of ODE~\eqref{FT-pff} is
\begin{equation}
    \widehat{p}^{\,(m)}_\FF(y,k) = P^{(m)}(k)\e^{-|k|y} +  Q^{(m)}(k) \e^{|k|y},
\end{equation}
where $P^{(m)}(k)$ and $Q^{(m)}(k)$ are functions dependent on the frequency $k$.
Since the Fourier transform has to be bounded at infinity, we have $Q^{(m)}(k) = 0$ and obtain
\begin{equation}
\label{eq:pFF}
    \widehat{p}^{\,(m)}_\FF(y,k) = P^{(m)}(k) \e^{-|k|y}.
\end{equation}
The function $P^{(m)}(k)$ is uniquely determined using the Fourier transform of the interface condition~\eqref{eq:IC-Stokes}:
\begin{equation}
\label{eq:IC-Stokes-Fourier}
   \alpha_\FF \widehat{v}^{\,(m)}_{2,\FF} - \partial_y \widehat{v}^{\,(m)}_{2,\FF} + \widehat{p}^{\,(m)}_{\FF} = -\alpha_\FF  \kappa_{22} \partial_y \widehat{p}^{\,(m-1)}_\PM + \widehat{p}^{\,(m-1)}_\PM.
\end{equation}
For the porous-medium problem~\eqref{eq:Darcy}, we get
\begin{equation}
\label{eq:FT-pPM}
    \kappa_{11} k^2 \widehat{p}^{\,(m)}_\PM - \kappa_{22} \partial_{yy} \widehat{p}^{\,(m)}_\PM = 0.
\end{equation}
The solution of ODE~\eqref{eq:FT-pPM} is given by
\begin{equation}
\label{eq:pPM}
    \widehat{p}^{\,(m)}_\PM(y,k) = \Phi^{(m)}(k) \e^{\sqrt{\kappa_{11}/\kappa_{22}}|k|y},
\end{equation}
where $\Phi^{(m)}(k)$ is a function of the frequency $k$.  It is uniquely determined by the Fourier transform of the interface condition~\eqref{eq:IC-Darcy}:
\begin{equation}
\label{eq:IC-Darcy-Fourier}
    \alpha_\PM \kappa_{22} \partial_y \widehat{p}^{\,(m)}_\PM +\widehat{p}^{\,(m)}_\PM = -\alpha_\PM \widehat{v}^{\,(m)}_{2,\FF} - \partial_y \widehat{v}^{\,(m)}_{2,\FF} + \widehat{p}^{\,(m)}_{\FF}.
\end{equation}
We compute the normal velocity $\widehat{v}^{\,(m)}_{2,\FF}$ as a function of the pressure $\widehat{p}^{\,(m)}_\FF$ considering the Fourier transform of the momentum balance equation~\eqref{eq:Stokes-momentum-simplified-2}:
\begin{equation}
\label{eq:FT-v2FF}
    k^2 \widehat{v}^{\,(m)}_{2,\FF} -\partial_{yy} \widehat{v}^{\,(m)}_{2,\FF} = |k| P^{(m)}(k) \e^{-|k| y}.
\end{equation}
The solution of ODE~\eqref{eq:FT-v2FF} is
\begin{equation}
\label{eq:v2}
    \widehat{v}^{\,(m)}_{2,\FF} = \left( A^{(m)}(k) + \frac{y}{2}P^{(m)}(k)\right) \e^{-|k|y},
\end{equation}
where $A^{(m)}(k)$ is again a function of the frequency $k$.
Substituting the Fourier transforms of $p_\FF,\, p_\PM$ and $v_{2,\FF}$ given in~\eqref{eq:pFF},~\eqref{eq:pPM} and~\eqref{eq:v2} into the Fourier transforms of the interface conditions~\eqref{eq:IC-Stokes-Fourier} and~\eqref{eq:IC-Darcy-Fourier}, and setting $y=0$, we get
\begin{align}
    (\alpha_\FF + |k|) A^{(m)} (k) + \frac{1}{2}P^{(m)}(k) &= (1-\alpha_\FF  \sqrt{\kappa_{22} \kappa_{11}} |k| ) \Phi^{(m-1)}(k), \label{eq:final-Stokes}\\
    (1+\alpha_\PM \sqrt{\kappa_{22} \kappa_{11}}|k|)\Phi^{(m)}(k) &= (-\alpha_\PM +|k|) A^{(m)}(k) + \frac{1}{2}P^{(m)}(k). \label{eq:final-Darcy}
\end{align}
In order to simplify~\eqref{eq:final-Stokes},~\eqref{eq:final-Darcy} and get rid of $P^{(m)}$, we use the Fourier transform of the momentum equation~\eqref{eq:Stokes-momentum-simplified-1}:
\begin{equation}
\label{eq:FT-v1FF}
    k^2 \widehat{v}^{\,(m)}_{1,\FF}- \partial_{yy} \widehat{v}^{\,(m)}_{1,\FF} = -ikP^{(m)}(k) \e^{-|k|y},
\end{equation}
which has the solution
\begin{equation}
    \widehat{v}^{\,(m)}_{1,\FF}(y,k) = \left(B^{(m)}(k) - \frac{iyk}{2|k|} P^{(m)}(k)\right)\e^{-|k|y}, \label{eq:v1}
\end{equation}
with the function $B^{(m)}(k)$.
Now, we use the Fourier transform of the continuity equation~\eqref{eq:Stokes-divergence-simplified}:
\begin{equation}
    ik \widehat{v}^{\,(m)}_{1,\FF} + \partial_y \widehat{v}^{\,(m)}_{2,\FF} = 0,
\end{equation}
to express $B^{(m)}(k)$ in terms of $A^{(m)}(k)$ and $P^{(m)}(k)$. This yields
\begin{equation}
\label{eq:B}
    B^{(m)}(k) 
    = -i\frac{|k|}{k}A^{(m)}(k) + \frac{i}{2 k}P^{(m)}(k).
\end{equation}
To formulate $P^{(m)}$ in terms of $A^{(m)}$ and $\Phi^{(m-1)}$, we consider the Fourier transform of the interface condition~\eqref{eq:generalised-BJ}:
\begin{equation}
\label{eq:FT-BJ}
    \frac{1}{\varepsilon N_1^{\mathrm{bl}}} \widehat{v}^{\,(m)}_{1,\mathrm{ff}} -  \partial_y \widehat{v}^{\,(m)}_{1,\FF} = -\frac{\varepsilon}{N_1^{\mathrm{bl}}}  \LRP{M_1^{1,\mathrm{bl}} ik \widehat{p}^{\,(m-1)}_\PM}.
\end{equation}
Inserting the Fourier transforms $\widehat{p}^{\,(m-1)}_\PM$ and $\widehat{v}^{\,(m)}_{1,\FF}$ presented in~\eqref{eq:pPM} and~\eqref{eq:v1} into~\eqref{eq:FT-BJ}, using~\eqref{eq:B}, and setting $y=0$, we get
\begin{equation}
    P^{(m)}(k) = A^{(m)}(k) \frac{\displaystyle \frac{1}{\varepsilon N_1^{\mathrm{bl}}|k|}+1}{\displaystyle\frac{1}{\varepsilon N_1^{\mathrm{bl}}2 k^2} + \frac{1}{|k|}} - \Phi^{(m-1)}(k)
    \frac{\displaystyle \frac{\varepsilon}{N_1^{\mathrm{bl}}}M_1^{1,\mathrm{bl}}}{\displaystyle \frac{1}{\varepsilon N_1^{\mathrm{bl}}2 k^2}+ \frac{1}{|k|}}.
\end{equation}
Equivalently, we formulate
\begin{equation}
\label{eq:final-P}
    P^{(m)}(k) = C_1(k,\varepsilon,N_1^{\mathrm{bl}}) \, A^{(m)}(k) - C_2(k,\varepsilon,N_1^{\mathrm{bl}},M_1^{1,\mathrm{bl}}) \, \Phi^{(m-1)}(k),
\end{equation}
where
\begin{equation}
    C_1(k,\varepsilon,N_1^{\mathrm{bl}}) = 2|k| \, \frac{1+\varepsilon N_1^{\mathrm{bl}}|k|}{1+2\,\varepsilon N_1^{\mathrm{bl}}|k|}, \quad
    C_2(k,\varepsilon,N_1^{\mathrm{bl}},M_1^{1,\mathrm{bl}}) = M_1^{1,\mathrm{bl}} \, \frac{2\varepsilon^2k^2}{1+2\,\varepsilon N_1^{\mathrm{bl}}|k|}.
\end{equation}
In conclusion, we obtain the boundary conditions~\eqref{eq:final-Stokes} and~\eqref{eq:final-P} on $\Gamma$ for the Stokes problem and~\eqref{eq:final-Darcy} for the Darcy problem. Now, we substitute $P^{(m)}(k)$ given in~\eqref{eq:final-P} into~\eqref{eq:final-Stokes} and~\eqref{eq:final-Darcy}, and then we substitute $A^{(m)}(k)$ from \eqref{eq:final-Stokes} into \eqref{eq:final-Darcy}. Finally, using algebraic manipulations, we obtain 
\begin{align*}
    \left|\Phi^{(m)}(k)\right| = \rho(\alpha_\FF,\alpha_\PM,k) \,\left|\Phi^{(m-1)}(k)\right|,
\end{align*}
with the reduction factor $\rho(\alpha_\FF,\alpha_\PM,k)$ defined in~\eqref{eq:reduction-factor-def}--\eqref{eq:rho-2}.
\end{proof}

The expression of the reduction factor $\rho$ derived in Theorem~\ref{thm:3-1} is too complex for further analysis of the method. Therefore, we now obtain 
a more manageable reduction factor $\widetilde{\rho}$ under reasonable assumptions. 
Note that $\rho_2(\alpha_\FF,\alpha_\PM,k) > 0$ for all $k\not= 0$ and both $\rho_1(\alpha_\FF,\alpha_\PM,k)$ and $\rho_2(\alpha_\FF,\alpha_\PM,k)$ are symmetric with respect to $k$. Therefore, we can restrict ourselves to the case $k>0$. First, we simplify $\rho_1(\alpha_\FF,\alpha_\PM,k)$ from~\eqref{eq:rho-1} taking into account that $(2 + 3 \varepsilon N_1^{\mathrm{bl}}|k|)/(1 + 2 \varepsilon N_1^{\mathrm{bl}}|k|)\in \left[3/2, 2\right]$. Then, we neglect $\rho_2(\alpha_\FF,\alpha_\PM,k)$ defined in~\eqref{eq:rho-2}, since the scale separation parameter $\varepsilon \ll 1$ and we have there $\varepsilon^2$. With these modifications, we obtain the simplified reduction factor $\widetilde{\rho}$:
\begin{equation}
\label{eq:rho-simplified}
    \widetilde{\rho}\,(\alpha_\FF,\alpha_\PM,k) = 
    \frac{\displaystyle \left( 1 - \alpha_\FF \sqrt{\kappa_{11}\kappa_{22}}\, k\right) \left( -\alpha_\PM + 2k \right) }{\displaystyle \left( 1 + \alpha_\PM \sqrt{\kappa_{11}\kappa_{22}}\, k\right) \left( \phantom{-}\alpha_\FF + 2 k \right)}.
\end{equation}
To accelerate the convergence of the method, we minimize the reduction factor $\widetilde{\rho}\,(\alpha_\FF, \alpha_\PM,k)$ given in~\eqref{eq:rho-simplified} over all relevant frequencies of the problem, $k \in [k_{\min}, k_{\max}]$, using the classical min-max approach
\begin{equation*}
    \min_{\alpha_\FF,\alpha_\PM > 0}
    \max_{k \in [k_{\min},k_{\max}]}
    |\widetilde{\rho}\,(\alpha_\FF,\alpha_\PM,k)|.
\end{equation*}
The exact values that minimize the reduction factor $\widetilde{\rho}\,(\alpha_\FF, \alpha_\PM,k)$ in~\eqref{eq:rho-simplified} are $\alpha_\FF^{\text{ex}}(k) = 1 / (\sqrt{\kappa_{11}\kappa_{22}} k)$ and $\alpha_\PM^{\text{ex}}(k) = 2 k$. Due to their dependency on the frequency $k$, we cannot use them directly. Considering the relation $\alpha_\FF^{\ex}(k)\,\alpha_\PM^{\ex}(k) = 2/\sqrt{\kappa_{11} \kappa_{22}}$, we restrict the search to the curve
\begin{equation}
    \label{eq:search-curve}
    \alpha_{\FF}\alpha_{\PM} = \frac{2}{\sqrt{\kappa_{11}\kappa_{22}}}.
\end{equation}
\begin{theo}
\label{theo:min-max}
    The solution of the min-max problem
    \begin{equation}
    \label{eq:min-max-problem}
         \min_{\alpha_{\FF}\alpha_{\PM} = \frac{2}{\sqrt{\kappa_{11}\kappa_{22}}}} \max_{k\in [k_{\min},k_{\max}]} \widetilde{\rho}\,(\alpha_{\FF},\alpha_{\PM}, k)
    \end{equation}
    is given by the pair
    \begin{align*}
        \alpha_{\FF}^{*} &=  -\frac{2\sqrt{\kappa_{11}\kappa_{22}}k_{\min}k_{\max} - 1}{\sqrt{\kappa_{11}\kappa_{22}}(k_{\min}+k_{\max})} + \sqrt{\left(\frac{2\sqrt{\kappa_{11}\kappa_{22}}k_{\min}k_{\max} - 1}{\sqrt{\kappa_{11}\kappa_{22}}(k_{\min}+k_{\max})}\right)^2 + \frac{2}{\sqrt{\kappa_{11}\kappa_{22}}}},\\
        \alpha_{\PM}^* &= \frac{2\sqrt{\kappa_{11}\kappa_{22}}k_{\min}k_{\max} - 1}{\sqrt{\kappa_{11}\kappa_{22}}(k_{\min}+k_{\max})} + \sqrt{\left(\frac{2\sqrt{\kappa_{11}\kappa_{22}}k_{\min}k_{\max} - 1}{\sqrt{\kappa_{11}\kappa_{22}}(k_{\min}+k_{\max})}\right)^2 + \frac{2}{\sqrt{\kappa_{11}\kappa_{22}}}}.
    \end{align*}
\end{theo}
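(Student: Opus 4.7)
The plan is to reduce the two-variable min-max problem to a single-variable optimization by exploiting the constraint~\eqref{eq:search-curve}, and then to identify the minimizer via an equioscillation argument. Writing $s=\sqrt{\kappa_{11}\kappa_{22}}$ and substituting $\alpha_\PM=2/(s\alpha_\FF)$ into~\eqref{eq:rho-simplified}, a direct computation collapses both numerator and denominator into perfect squares, giving
\begin{equation*}
    \widetilde{\rho}(\alpha_\FF,\alpha_\PM,k) = -\frac{2}{s}\left(\frac{1-\alpha_\FF\, s\, k}{\alpha_\FF + 2k}\right)^{2}.
\end{equation*}
Consequently,~\eqref{eq:min-max-problem} reduces (after taking absolute values, since $\widetilde{\rho}\le 0$ on the search curve) to minimizing over $\alpha_\FF>0$ the function $F(\alpha_\FF)=\max_{k\in[k_{\min},k_{\max}]} f(\alpha_\FF,k)$, where $f(\alpha_\FF,k)=|1-\alpha_\FF s k|/(\alpha_\FF+2k)$.

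Next, I would analyze $k\mapsto f(\alpha_\FF,k)$ for fixed $\alpha_\FF>0$. Its signed version $g(k)=(1-\alpha_\FF s k)/(\alpha_\FF+2k)$ satisfies $g'(k)=-(\alpha_\FF^{2}s+2)/(\alpha_\FF+2k)^{2}<0$, so $g$ is strictly decreasing and vanishes at $k_0(\alpha_\FF)=1/(\alpha_\FF s)$. Therefore $f(\alpha_\FF,\cdot)$ strictly decreases on $[0,k_0]$ from $1/\alpha_\FF$ down to zero and strictly increases on $[k_0,\infty)$ toward the horizontal asymptote $\alpha_\FF s/2$. Restricted to the compact interval $[k_{\min},k_{\max}]$, the maximum is therefore attained at one of the endpoints $k_{\min}$ or $k_{\max}$.

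To locate the minimizer of $F$, I would distinguish three regimes according to the position of $k_0(\alpha_\FF)$. For $\alpha_\FF\le 1/(sk_{\max})$, $F(\alpha_\FF)=f(\alpha_\FF,k_{\min})$, which is strictly decreasing in $\alpha_\FF$; for $\alpha_\FF\ge 1/(sk_{\min})$, $F(\alpha_\FF)=f(\alpha_\FF,k_{\max})$, which is strictly increasing in $\alpha_\FF$; in between, $F$ is the pointwise maximum of two functions with opposite monotonicities. The unique minimum of $F$ therefore occurs at the balance point $f(\alpha_\FF^{*},k_{\min})=f(\alpha_\FF^{*},k_{\max})$. Equating the two endpoint expressions in this middle regime (where $\alpha_\FF s k_{\min}<1<\alpha_\FF s k_{\max}$, so the sign of $g$ flips across $[k_{\min},k_{\max}]$) and clearing denominators leads, after straightforward algebra, to the quadratic
\begin{equation*}
    s(k_{\min}+k_{\max})\,\alpha_\FF^{2} + 2\,(2sk_{\min}k_{\max}-1)\,\alpha_\FF - 2(k_{\min}+k_{\max}) = 0.
\end{equation*}

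The product of its roots is $-2/s<0$, so exactly one is positive, and the quadratic formula delivers precisely the stated $\alpha_\FF^{*}$. The companion value then follows by rationalization: writing $\alpha_\FF^{*}=-a+b$ with $a=(2sk_{\min}k_{\max}-1)/(s(k_{\min}+k_{\max}))$ and $b=\sqrt{a^{2}+2/s}$, the identity $b^{2}-a^{2}=2/s$ yields $\alpha_\PM^{*}=2/(s\alpha_\FF^{*})=2(a+b)/(s(b^{2}-a^{2}))=a+b$, matching the claimed expression. I expect the main obstacle to be the case analysis justifying that $F$ attains its global minimum precisely at the equioscillation point, together with the a posteriori check that $\alpha_\FF^{*}\in[1/(sk_{\max}),1/(sk_{\min})]$, ensuring the middle regime is the one actually realized; both of these rely on the monotonicities established in the second paragraph.
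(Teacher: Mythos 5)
Your proposal is correct and follows essentially the same route as the paper: substitute the constraint $\alpha_\FF\alpha_\PM = 2/\sqrt{\kappa_{11}\kappa_{22}}$ to collapse $\widetilde{\rho}$ into a constant times $\bigl((1-\alpha_\FF\sqrt{\kappa_{11}\kappa_{22}}\,k)/(\alpha_\FF+2k)\bigr)^2$, characterize the optimum by equioscillation $\widetilde{\rho}\,(\alpha_\FF^*,k_{\min})=\widetilde{\rho}\,(\alpha_\FF^*,k_{\max})$, and take the positive root of the resulting quadratic \eqref{eq:a-sq}. The only differences are that you spell out the monotonicity and endpoint-regime analysis justifying the equioscillation characterization (which the paper delegates to Proposition~3.3 of the cited reference), and that your computation gives the prefactor $-2/\sqrt{\kappa_{11}\kappa_{22}}$ where the paper's \eqref{eq:rho} displays $+2/\sqrt{\kappa_{11}\kappa_{22}}$ --- a harmless sign slip in the paper that you correctly neutralize by passing to absolute values.
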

\begin{proof}
    Under assumption~\eqref{eq:search-curve} the reduction factor~\eqref{eq:rho-simplified} reads
    \begin{equation}
    \label{eq:rho}
        \widetilde{\rho}\,(\alpha_{\FF},k) = \frac{2}{\sqrt{\kappa_{11}\kappa_{22}}} \left(\frac{1-\alpha_{\FF}\sqrt{\kappa_{11}\kappa_{22}}k}{\alpha_{\FF} + 2 k}\right)^2.
    \end{equation}
    To compute the optimal parameter, we follow the same approach as in~\cite[Proposition 3.3]{Discacciati_Gerardo-Giorda_18}. The value $\alpha_{\FF}^* > 0$ minimizing the reduction factor~\eqref{eq:rho} is given by
    $\widetilde{\rho}\,(\alpha_{\FF}^*, k_{\min}) = \widetilde{\rho}\,(\alpha_{\FF}^*, k_{\max}).$
    This is equivalent to solving the algebraic equation
    \begin{equation}
    \label{eq:a-sq}
        \alpha_{\FF}^2+ 2\alpha_{\FF}\frac{2\sqrt{\kappa_{11}\kappa_{22}}k_{\min}k_{\max} - 1}{\sqrt{\kappa_{11}\kappa_{22}}(k_{\min}+k_{\max})}-\frac{2}{\sqrt{\kappa_{11}\kappa_{22}}} = 0.
    \end{equation}
    The solutions are given by
    \begin{align*}
        \alpha_{\FF}^{*} &=  - \frac{2\sqrt{\kappa_{11}\kappa_{22}}k_{\min}k_{\max} - 1}{\sqrt{\kappa_{11}\kappa_{22}}(k_{\min}+k_{\max})} + \sqrt{\left(\frac{2\sqrt{\kappa_{11}\kappa_{22}}k_{\min}k_{\max} - 1}{\sqrt{\kappa_{11}\kappa_{22}}(k_{\min}+k_{\max})}\right)^2 + \frac{2}{\sqrt{\kappa_{11}\kappa_{22}}}},\\
        \alpha_{\PM}^* &= \frac{2\sqrt{\kappa_{11}\kappa_{22}}k_{\min}k_{\max} - 1}{\sqrt{\kappa_{11}\kappa_{22}}(k_{\min}+k_{\max})} + \sqrt{\left( \frac{2\sqrt{\kappa_{11}\kappa_{22}}k_{\min}k_{\max} - 1}{\sqrt{\kappa_{11}\kappa_{22}}(k_{\min}+k_{\max})}\right)^2 + \frac{2}{\sqrt{\kappa_{11}\kappa_{22}}}}.
    \end{align*}
    \end{proof}
    Note that the reduction factor~\eqref{eq:rho} satisfies  $\widetilde{\rho}\,(\alpha_\FF^*,k) < 1$ for all $k\in [k_{\min},k_{\max}]$.

\subsection{Interface system associated with the Robin--Robin method}

For the purpose of the implementation, it is convenient to reformulate the Robin--Robin method \eqref{eq:stokesStep}--\eqref{eq:darcyStep} as a linear system for suitably chosen interface variables. To this aim, for the iteration $m$ of the algorithm, we introduce the auxiliary variables
\begin{align*}
    \lambda_{\Gamma}^{(m-1)} &= - \frac{\varepsilon}{\vec N^{\mathrm{bl}} \vdot \vec \tau } \LRP{\ten M^{\mathrm{bl}}\nabla p^{(m-1)}_\PM \vdot \vec \tau }, \\
    \lambda_\PM^{(m-1)} &= \alpha_\FF \ten K \nabla p^{(m-1)}_\PM \vdot \vec n +  p^{(m-1)}_\PM, \\
    \lambda_\FF^{(m)} &= \alpha_\PM \vec v^{(m)}_\FF \vdot \vec n - \vec n \vdot \ten T (\vec v^{(m)}_\FF, p^{(m)}_\FF) \vec n.
\end{align*}

Using this notation and denoting by $\vec V_\FF$ a suitable subspace of $H^1(\Omega_\FF)$ to account for possible Dirichlet boundary conditions on the Stokes velocity, the weak form of the Stokes problem \eqref{eq:stokesStep} becomes: find $\vec v_\FF^{(m)} \in \vec V_\FF$ and $p_\FF^{(m)} \in L^2(\Omega_\FF)$ such that, for all $\vec u \in \vec V_\FF$ and $q \in L^2(\Omega_\FF)$, 
\begin{equation}\label{eq:stokesWeak}
\begin{array}{l}
\displaystyle
 \int_{\Omega_\FF} \nabla \vec v_\FF^{(m)} \boldsymbol{:} \nabla \vec u - \int_{\Omega_\FF} p_\FF^{(m)} \nabla \vdot \vec u + \int_\Gamma \alpha_\FF \LRP{\vec v_\FF^{(m)} \vdot \vec n}(\vec u \vdot \vec n)   \\[4.5mm]
\displaystyle
+\int_\Gamma \frac{1}{\varepsilon (\vec N^{\mathrm{bl}} \vdot \vec \tau)} \LRP{\vec v_\FF^{(m)} \vdot \vec \tau}(\vec u \vdot \vec \tau) =\int_{\Omega_\FF} \vec f_\FF \vdot \vec u
 - \int_\Gamma \lambda_\PM^{(m-1)} (\vec u \vdot \vec n) + \int_\Gamma \lambda_\Gamma^{(m-1)} (\vec u \vdot \vec \tau), \\[4.5mm]
\displaystyle
 \int_{\Omega_\FF} -q \, \nabla \vdot \vec v_\FF^{(m)} = 0.
\end{array}
\end{equation}

Moreover, letting  $V_\PM$ be a suitable subspace of $H^1(\Omega_\PM)$ to account for possible Dirichlet boundary conditions on the Darcy pressure, the weak formulation of the Darcy problem \eqref{eq:darcyStep} becomes: find $p_\PM^{(m)} \in V_{\PM}$ such that, for all $\psi \in V_{\PM}$,
\begin{equation}\label{eq:darcyWeak}
    \int_{\Omega_{\PM}} \ten K \nabla p_\PM^{(m)} \vdot \nabla \psi + \int_{\Gamma} \frac{1}{\alpha_\PM} p_\PM^{(m)} \, \psi =  \int_{\Omega_\PM} f_\PM \, \psi + \int_\Gamma \frac{1}{\alpha_\PM} \lambda_\FF^{(m)} \, \psi \, .
\end{equation}
Finally, remark that thanks to the last condition in \eqref{eq:stokesStep}, it holds
\begin{equation}\label{eq:updateStep}
\lambda_{\FF}^{(m)} 
= \lambda_\PM^{(m-1)} +(\alpha_\FF + \alpha_\PM) \vec{v}_\FF^{(m)}\vdot \vec n \qquad \text{on }\Gamma.
\end{equation}
For all $m$, we denote
\begin{equation*}
\vec{\lambda}_\PM^{(m)} = \int_{\Gamma} \lambda_{\PM}^{(m)} (\vec{u}\vdot \vec{n}),\,
\quad
\vec{\lambda}_{\Gamma}^{(m)} =\int_\Gamma \lambda_\Gamma^{(m)} (\vec u \vdot \vec \tau)\,,
\quad
\vec{\lambda}_\FF^{(m)} = \int_{\Gamma} \lambda_{\FF}^{(m)} \psi\,.
\end{equation*}

Consider now a Galerkin finite element approximation of the Stokes--Darcy problem on a computational grid that is conforming at the interface $\Gamma$. For simplicity, we assume that inf-sup stable finite elements are used for the Stokes equations and that Lagrangian elements discretize Darcy's pressure $p_\PM$.
Let the subindices $I_{\FF},\, I_{\PM}$ and $\Gamma$ denote the internal degrees of freedom in $\Omega_{\FF},\, \Omega_{\PM}$ and on the interface $\Gamma$, respectively. Then, with obvious choice of notation, the algebraic form of the Stokes problem \eqref{eq:stokesWeak} becomes
\begin{equation}\label{eq:stokesAlgebraic}
\underbrace{
\begin{pmatrix}
A_{I_{\FF},I_{\FF}} & A_{I_{\FF},\Gamma} & B_{I_{\FF}} \\
A_{\Gamma, I_{\FF}} & A_{\Gamma,\Gamma} + \alpha_\FF M_{\Gamma,\Gamma} & B_{\Gamma} \\
B^\top_{I_{\FF}} & B^\top_{\Gamma} & 0 \\
\end{pmatrix}
}_{=A_\FF}
\begin{pmatrix}
\vec{v}_{\FF,I_{\FF}}^{(m)}\\
\vec{v}_{\FF,\Gamma}^{(m)} \\
p_{\FF}^{(m)}
\end{pmatrix} =
\begin{pmatrix}
\vec{f}_{\FF,I_{\FF}} +\vec{\lambda}^{(m-1)}_{\Gamma}\\
\vec{f}_{\FF,\Gamma} - \vec{\lambda}^{(m-1)}_{\PM}\\
\vec 0
\end{pmatrix},
\end{equation}
where $\vec{v}_{\FF,\Gamma}^{(m)}$ denotes the vector of the degrees of freedom of the normal velocity $\vec v_\FF^{(m)} \vdot \vec n$ on~$\Gamma$. On the other hand, the algebraic form of Darcy's problem \eqref{eq:darcyWeak} is
\begin{equation}\label{eq:darcyAlgebraic}
\underbrace{
\begin{pmatrix}
C_{I_{\PM},I_{\PM}} & C_{I_{\PM},\Gamma} \\
C_{\Gamma,I_{\PM}} & C_{\Gamma,\Gamma} + \alpha_\PM^{-1}M_{\Gamma,\Gamma}
\end{pmatrix}
}_{=A_\PM}
\begin{pmatrix}
p_{\PM, I_{\PM}}^{(m)}\\
p_{\PM,\Gamma}^{(m)} 
\end{pmatrix} = 
\begin{pmatrix}
\vec{f}_{\PM,I_{\PM}}\\
\vec{f}_{\PM,\Gamma}+\alpha_\PM^{-1}\,\vec{\lambda}^{(m)}_{\FF}
\end{pmatrix}.
\end{equation}
Algorithm \eqref{eq:stokesStep}--\eqref{eq:darcyStep} can be rewritten in algebraic form as: given $\vec \lambda_\Gamma^{(0)}$ and $\vec \lambda_\PM^{(0)}$, for $m \geq 1$ until convergence,
\begin{enumerate}
\item
Solve the Stokes problem \eqref{eq:stokesAlgebraic}.

\item
Compute
\begin{equation}\label{eq:step2Algebraic}
\vec{\lambda}^{(m)}_{\FF} = \vec{\lambda}_{\PM}^{(m-1)} + (\alpha_\FF+\alpha_\PM)M_{\Gamma,\Gamma} \vec{v}_{\FF,\Gamma}^{(m)}.
\end{equation}

\item
Solve Darcy's problem \eqref{eq:darcyAlgebraic}.

\item
Compute
\begin{eqnarray}
\vec \lambda^{(m)}_{\Gamma} &=& -\frac{\varepsilon}{\vec N^{\mathrm{bl}} \vdot \vec \tau } \LRP{\ten M^{\mathrm{bl}}\,\nabla p^{(m)}_\PM \vdot \vec \tau}, \label{eq:step4aAlgebraic} \\
    \vec{\lambda}_\PM^{(m)} &=& -\frac{\alpha_{\FF}}{\alpha_{\PM}} \vec{\lambda}_{\FF}^{(m)} + \left(\frac{\alpha_{\FF}}{\alpha_{\PM}} + 1\right)M_{\Gamma,\Gamma}\,p_{\PM,\Gamma}^{(m)}\,. \label{eq:step4bAlgebraic}
\end{eqnarray}

\end{enumerate}

We now rewrite steps 1 and 2 as an interface equation for the unknown $\vec{\lambda}_\FF^{(m)}$. To this aim, let $R_{\Omega_\FF\to\Gamma}$ be the restriction operator that associates the Stokes normal velocity on $\Gamma$ to the Stokes velocity and pressure in $\Omega_{\FF}$ :
\[
  R_{\Omega_\FF\to\Gamma} :
     \begin{pmatrix}
      \vec{v}_{\FF,I_\FF} \\
      \vec{v}_{\FF,\Gamma} \\
      p_\FF
     \end{pmatrix}
     \to
     \vec{v}_{\FF,\Gamma}.
\]
Denote $\vec \eta_\PM = (-\vec{\lambda}_{\Gamma}, \, \vec{\lambda}_{\PM})^\top$ and let $\widetilde{R}_{\Gamma \to \Omega_\FF}$ be the extension operator that, given $\vec \eta_\PM$ on $\Gamma$, constructs the vector at the right-hand side of \eqref{eq:stokesAlgebraic}, i.e.,
\begin{equation*}
  \widetilde{R}_{\Gamma\to\Omega_\FF} :
     \vec \eta_\PM
     \to
     \begin{pmatrix}
      -\vec{\lambda}_{\Gamma} \\
       \vec{\lambda}_{\PM} \\
       \mathbf{0}
     \end{pmatrix}.
\end{equation*}
Using these operators, from \eqref{eq:stokesAlgebraic}, we can write
\begin{equation*}
\vec{v}_{\FF,\Gamma}^{(m)} 
= - R_{\Omega_\FF\to\Gamma}
    A_\FF^{-1}
   \widetilde{R}_{\Gamma\to\Omega_\FF} \vec{\eta}^{(m-1)} \\
   +
   R_{\Omega_\FF\to\Gamma}
   A_\FF^{-1}
   \begin{pmatrix}
    \vec{f}_{\FF,I_{\FF}}\\
    \vec{f}_{\FF,\Gamma}\\
    \vec 0
   \end{pmatrix}.
\end{equation*}
Substituting this expression into \eqref{eq:step2Algebraic} and denoting
\begin{equation*}
\vec{b}_{\FF,\Gamma}
= (\alpha_\FF + \alpha_\PM) \, M_{\Gamma,\Gamma} \, R_{\Omega_\FF\to\Gamma}
   A_\FF^{-1}
   \begin{pmatrix}
    \vec{f}_{\FF,I_{\FF}}\\
    \vec{f}_{\FF,\Gamma}\\
    \vec 0
   \end{pmatrix},
\end{equation*}
we obtain
\begin{equation*}
\vec{\lambda}_\FF^{(m)} = \vec{\lambda}_\PM^{(m-1)} - (\alpha_\FF + \alpha_\PM) \, M_{\Gamma,\Gamma} \, R_{\Omega_\FF \to \Gamma} \, A_\FF^{-1} \, \widetilde{R}_{\Gamma \to \Omega_\FF} \, \vec{\eta}^{(m-1)} + \vec{b}_{\FF,\Gamma}\,.
\end{equation*}
Noticing that
\begin{equation*}
\vec{\lambda}_\PM^{(m-1)} 
= \begin{pmatrix} 0, & I_{\Gamma,\Gamma} \end{pmatrix}
  \vec{\eta}_\PM^{(m-1)},
\end{equation*}
and letting
\begin{equation*}
S_\FF = (\alpha_\FF + \alpha_\PM) \, M_{\Gamma,\Gamma} \, R_{\Omega_\FF \to \Gamma} \, A_\FF^{-1} \, \widetilde{R}_{\Gamma \to \Omega_\FF} - \begin{pmatrix} 0, & I_{\Gamma,\Gamma} \end{pmatrix},
\end{equation*}
steps 1 and 2 in the algorithm above can be rewritten as: given $\vec{\eta}_\PM^{(m-1)}$, compute $\vec{\lambda}_\FF^{(m)}$:
\begin{equation}\label{eq:gaussSeidel1}
\vec{\lambda}_\FF^{(m)} = - S_\FF \, \vec{\eta}_\PM^{(m-1)} + \vec{b}_{\FF,\Gamma}.
\end{equation}

We focus now on steps 3 and 4 of the Robin--Robin algorithm. Let $R_{\Omega_\PM \to \Gamma}$ be the restriction operator that to all the degrees of freedom in $\Omega_\PM$ associates those on $\Gamma$, and let its transposed be the corresponding extension operator. From \eqref{eq:darcyAlgebraic}, we find
\begin{equation*}
{p}_{\PM,\Gamma}^{(m)}
=
R_{\Omega_\PM\to\Gamma} \, A_\PM^{-1}
\left( 
\begin{pmatrix}
\vec{f}_{\PM,I_{\PM}} \\ \vec{f}_{\PM,\Gamma}
\end{pmatrix}
+
\begin{pmatrix}
\mathbf{0} \\ \alpha_\PM^{-1} \vec{\lambda}_\FF^{(m)}
\end{pmatrix}
\right).
\end{equation*}
Then, by substituting this expression into \eqref{eq:step4bAlgebraic} and upon defining
\begin{equation*}
\vec{b}_{\PM,\Gamma}
= \left( 1 + \frac{\alpha_\FF}{\alpha_\PM} \right)
  M_{\Gamma,\Gamma}\,
  R_{\Omega_\PM \to \Gamma}\, A_\PM^{-1}
  \begin{pmatrix}
  \vec{f}_{\PM,I_\PM} \\
  \vec{f}_{\PM,\Gamma}
  \end{pmatrix}
\end{equation*}
and
\begin{equation}
S_\PM
=
\frac{\alpha_\FF}{\alpha_\PM} I_{\Gamma,\Gamma} - 
\frac{1}{\alpha_\PM} \left( 1 + \frac{\alpha_\FF}{\alpha_\PM} \right)
M_{\Gamma,\Gamma} \, R_{\Omega_\PM\to\Gamma} A_\PM^{-1} \, R_{\Omega_\PM\to\Gamma}^\top \, ,
\end{equation}
we can conclude that steps 3 and 4 are equivalent to: given $\vec{\lambda}_\FF^{(m)}$, compute
\begin{equation}\label{eq:gaussSeidel2}
\vec{\lambda}_\PM^{(m)} = - S_\PM \, \vec{\lambda}_\FF^{(m)} + \vec{b}_{\PM,\Gamma}\,.
\end{equation}
Finally, since $p_\PM^{(m)}$ depends on $\vec{\lambda}_\FF^{(m)}$, let us rewrite \eqref{eq:step4aAlgebraic} as
\begin{equation}\label{eq:gaussSeidel3}
\vec \lambda^{(m)}_{\Gamma} = -\frac{\varepsilon}{\vec N^{\mathrm{bl}} \vdot \vec \tau } \LRP{\ten M^{\mathrm{bl}}\,\nabla p^{(m)}_\PM(\vec{\lambda}_\FF^{(m)} \vdot \vec \tau} =: S_\PM^\tau \, \vec{\lambda}_\FF^{(m)}.
\end{equation}

Notice that, while \eqref{eq:gaussSeidel1} and \eqref{eq:gaussSeidel2} involve the solution of one Stokes and one Darcy problem, respectively, equation \eqref{eq:gaussSeidel3} only requires post-processing of the porous medium pressure $p_\PM^{(m)}$. We can also rewrite \eqref{eq:gaussSeidel2} and \eqref{eq:gaussSeidel3} in compact form as
\begin{equation*}
\begin{pmatrix}
-\vec{\lambda}_\Gamma^{(m)} \\ \vec{\lambda}_\PM^{(m)}
\end{pmatrix}
+
\begin{pmatrix}
S_\PM^\tau \\ S_\PM
\end{pmatrix}
\vec{\lambda}_\FF^{(m)}
=
\begin{pmatrix}
\vec{0} \\ \vec{b}_{\PM,\Gamma}
\end{pmatrix},
\end{equation*}
or, equivalently, with obvious choice of notation,
\begin{equation}\label{eq:gaussSeidel4}
\vec{\eta}_\PM^{(m)} + \widetilde{S}_\PM \, \vec{\lambda}_\FF^{(m)} = \widetilde{\vec{b}}_{\PM,\Gamma}\,.
\end{equation}

Therefore, we can conclude that one iteration of the Robin--Robin algorithm is equivalent to a Gauss--Seidel step to solve the interface system
\begin{equation}\label{eq:interfaceSystem}
\begin{pmatrix}
I_{\Gamma,\Gamma} & S_\FF \\
\widetilde{S}_\PM & I_{\Gamma,\Gamma}
\end{pmatrix}
\begin{pmatrix}
\vec{\lambda}_\FF \\
\vec{\eta}_\PM
\end{pmatrix}
=
\begin{pmatrix}
\vec{b}_{\FF,\Gamma} \\
\widetilde{\vec{b}}_{\PM,\Gamma}
\end{pmatrix}.
\end{equation}
The matrix of the linear system \eqref{eq:interfaceSystem} is not symmetric and it is indefinite. Therefore, system~\eqref{eq:interfaceSystem} can be solved using an iterative method such as, e.g., GMRES and, each iteration of the method requires to solve independently one Stokes and one Darcy problem.

\section{Numerical simulation results}\label{sec:numerics}

In this section, we study the performance of the developed  Robin--Robin method. First, we investigate the robustness of the algorithm with respect to the mesh size $h$ and fixed physical parameters using an analytical solution (Test 1) from our previous work~\cite{Eggenweiler_Rybak_Discacciati_21}. Then, we fix the mesh size $h$ and consider an example with varying physical parameters (Test 2).
In both numerical tests, we consider a finite element discretization on structured meshes that are conforming at the interface $\Gamma$. The Stokes problem is discretized using the inf-sup stable $\mathbb{Q}_2-\mathbb{Q}_1$ finite elements and the porous-medium pressure $p_\PM$ is approximated using $\mathbb{Q}_2$ finite elements. The gradient of the pressure in the porous medium $\nabla p_\PM$, that is needed to update the quantity $\vec{\lambda}_\Gamma$ in \eqref{eq:step4aAlgebraic} (equivalently, \eqref{eq:gaussSeidel3}), is reconstructed using the gradient post-processing method proposed in \cite{Loula:1995:CMAME}. The interface system \eqref{eq:interfaceSystem} is solved by GMRES (without restart) with tolerance $\text{tol}= 10^{-9}$ for the stopping criterion of the residual, while, at each GMRES iteration, the local Stokes and Darcy problems are solved using a direct method.

\subsection{Test 1 (analytical solution)}
Here, we test the developed Robin--Robin algorithm considering the analytical solution for the Stokes--Darcy problem with the generalized interface conditions~\eqref{eq:Stokes}--\eqref{eq:IC-tangential} from~\cite{Eggenweiler_Rybak_Discacciati_21}. The computational domains are $\Omega_\FF=[0,1]\times [0,0.5]$ and $\Omega_\PM=[0,1]\times [0.5, 1]$ with the interface $\Gamma =[0,1]\times \{0.5\}$. The source terms $\vec{f}_\FF$, $f_\PM$ and Dirichlet boundary conditions are chosen in such a way that the exact solution of the coupled problem is
\begin{equation}
\begin{split}
v_{1,\FF}  &= \sin\left(\frac{\pi x}2\right) \cos\left(\frac{\pi y}2\right)\, , \hspace{8.5ex}
p_\FF = \frac{\sqrt{2}}{2}\cos\left(\frac{\pi x}2\right) \left(\frac{\e^{y-0.5}}{\kappa}  - \frac{\pi}{2}\right) \, ,
\\
v_{2,\FF}  &=  -\cos\left(\frac{\pi x}2\right) \sin\left(\frac{\pi y}2\right) \, ,
 \hspace{5ex}
p_\PM = \frac{\sqrt{2}}{2} \cos\left(\frac{\pi x}2\right)\frac{\e^{y-0.5}}{\kappa}.
\end{split}
\label{eq:exact-solution}
\end{equation}
We consider $\varepsilon = 10^{-1}$ and the permeability value $\kappa=10^{-4}$. The exact solution~\eqref{eq:exact-solution} satisfies the generalized interface conditions~\eqref{eq:IC-mass}--\eqref{eq:IC-tangential} for the following boundary layer coefficients $\N = 1/\pi \approx  0.3183$ and $\M = 2\kappa (1+0.5\varepsilon)/(\pi\varepsilon^2)\approx 0.00668$. These values are within a typical range for many pore geometries.

We solve the coupled problem~\eqref{eq:Stokes}--\eqref{eq:IC-tangential} numerically using four computational meshes with mesh size $h = 2^{-(j+2)}$, $j=1,\ldots,4$. The computed values of the optimized parameters $\alpha_\FF$ and $\alpha_\PM$ are indicated in Table~\ref{tab:results-case-1}, where we also report the number of GMRES iterations for the four meshes. Since the number of iteration steps changes only slightly, we conclude the robustness of the method with respect to the mesh size.

\begin{table}[!ht]
\begin{center}
\begin{tabular}{c | c c | c}
\hline
$h$ & $\alpha_\FF$ & $\alpha_\PM$ & \#~iterations \\
\hline
$2^{-3}$ & $2.58 \times 10^2$ & $7.75 \times 10^1$ & 14 \\
$2^{-4}$ & $1.91 \times 10^2$ & $1.05 \times 10^2$ & 16 \\
$2^{-5}$ & $1.61 \times 10^2$ & $1.24 \times 10^2$ & 17 \\
$2^{-6}$ & $1.48 \times 10^3$ & $1.35 \times 10^2$ & 18 \\
\hline
\end{tabular}
\caption{Optimal parameters $\alpha_\FF$ and $\alpha_\PM$ and number of GMRES iterations for different meshes}
\label{tab:results-case-1}
\end{center}
\end{table}

\subsection{Test 2 (general filtration problem)}
Now, we consider the general filtration problem defined in our previous work~\cite{Strohbeck-Eggenweiler-Rybak-23}. Here, we have an arbitrary flow to the fluid--porous interface (see microscale velocity field in Fig.~\ref{fig:general-filtration-problem}) for which the generalized interface conditions~\eqref{eq:IC-mass}--\eqref{eq:IC-tangential} are suitable. The free-flow region is $\Omega_\FF=[0,1]\times [0,0.5]$, the porous medium is $\Omega_\PM=[0,1]\times [-0.5,0]$ so that the interface is $\Gamma=[0,1]\times \{0\}$. We define $\Gamma_{\text{out}} = \left(\{0\} \times [0,0.5] \right) \cup \left( \{1\} \times [0,0.225] \right)$, consider zero source terms in both domains, $\vec{f}_\FF = \vec{0}$ and $f_\PM = 0$, and set the following boundary conditions
\begin{align*}
    \vec{v}_\FF &=  (0,-0.7\operatorname{sin}(\pi x)) &&\text{\; on \; }[0,1] \times \{0.5\} \, ,
    \qquad \
    \vec{v}_\FF = \vec 0 \text{\; on \; }\{1\} \times [0.225,0.5] \, ,
    \\
   v_{1,\FF} &= 0 , \quad \partial v_{2,\FF}/\partial x = 0
   &&\text{\; on \;} \Gamma_{\text{out}},\\
   v_{2,\PM} &= 0 &&\text{\; on \; }\partial \Omega_\PM \backslash \Gamma.
\end{align*}

\begin{figure}[!ht]
\begin{tikzpicture}
    \node at (0,0) {\includegraphics[scale = 1.3]{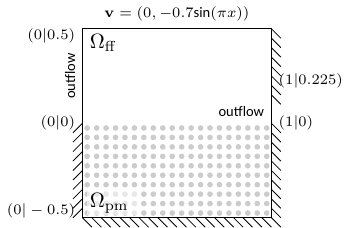}};
    \node at (8,-0.56) {\includegraphics[scale=0.16]{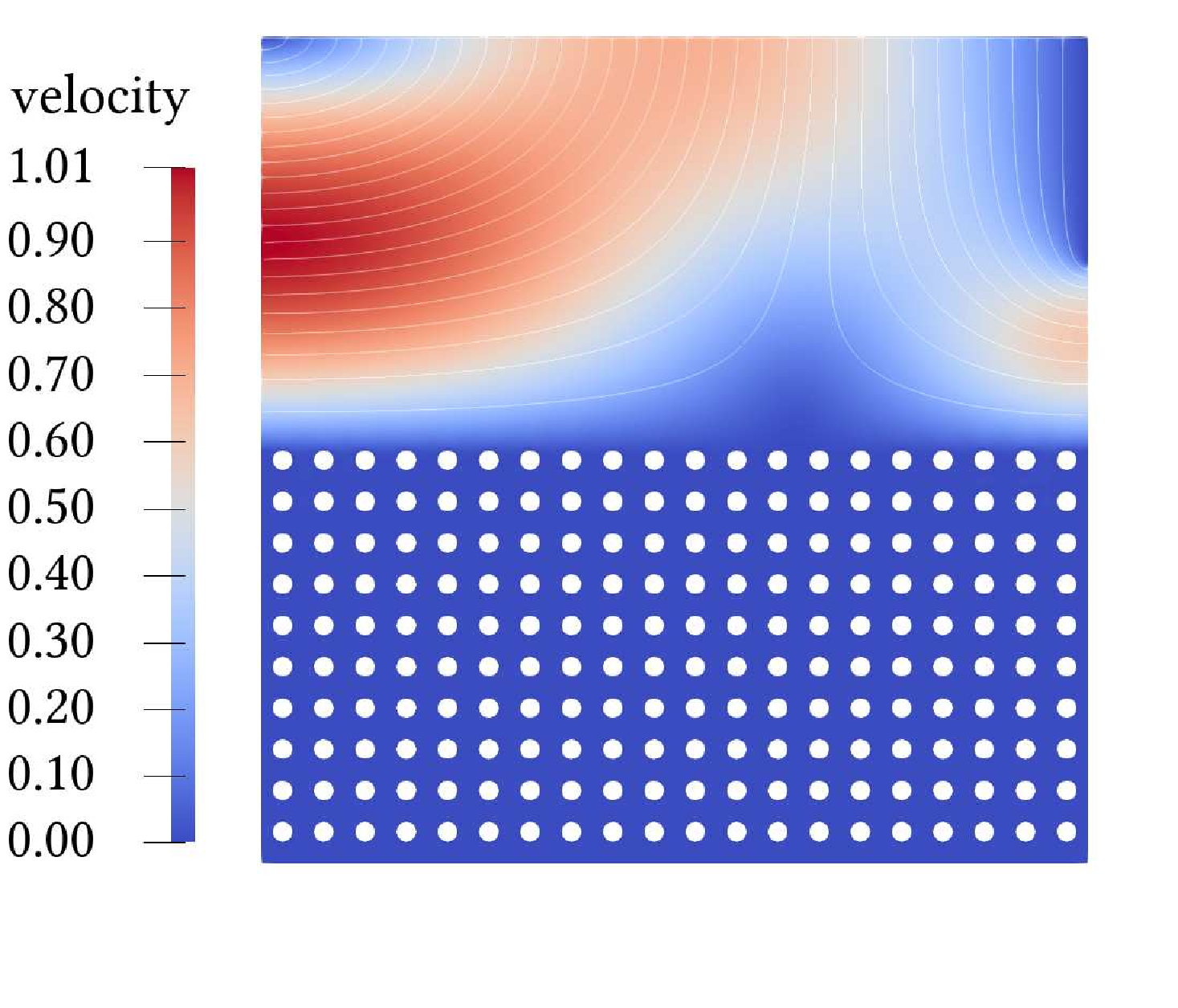}};
\end{tikzpicture}
\caption{Flow system description (left) and visualization~\cite{Strohbeck-Eggenweiler-Rybak-23} of the microscale velocity field (right) for the general filtration problem}
\label{fig:general-filtration-problem}
\end{figure}

We investigate the influence of physical parameters on the convergence rate of the developed Robin--Robin method. The orders of the boundary layer coefficients $\N$ and $\M$ are taken based on our study of boundary layer constants for different pore geometries. We set $\N = 10^{-2}$ since it reflects the order of typical values and vary $\M$, the permeability $\kappa$ and the scale separation parameter $\varepsilon$.
The computational mesh is characterized by $h=0.0125$. The optimal coefficients $\alpha_\FF$ and $\alpha_\PM$ computed for various combinations of the physical parameters are reported in Table~\ref{tab:results-case-2b} together with the number of iterations needed for the convergence of the method. The method shows high robustness with respect to the scale separation parameter $\varepsilon$ and the boundary layer constant $\M$ (Table~\ref{tab:results-case-2b}). However, for the intrinsic permeability $\kappa$, we observe moderate robustness with decreasing number of iteration steps for smaller permeability values. The intrinsic permeability $\kappa$ is indeed the parameter that affects the convergence rate of the algorithm in the most significant way. This can be seen by plotting the error reduction  factor \eqref{eq:reduction-factor-def} and its simplified form \eqref{eq:rho-simplified} versus $k$ as done in Fig.~\ref{fig:convergenceFactorsTest2a} for the combinations of parameters reported in cases 2, 4 and 8 in Table~\ref{tab:results-case-2b}. Notice that $k_{\min} = \pi$ and $k_{\max} = \pi/(h/2)$, where $h$ is the mesh size and the factor 2 accounts for the fact that quadratic elements $\mathbb{Q}_2$ are used to approximate the pressure $p_\PM$. From the graphs (Fig.~\ref{fig:convergenceFactorsTest2a}), first of all we notice that the simplified reduction factor $\widetilde{\rho}$ provides a good approximation of the original reduction factor $\rho$ since the contribution of the term $\rho_2$ is negligible compared to $\rho_1$. Moreover, we notice that in case 2 with $\kappa=10^{-3}$, there is a significant number of error frequencies for which the value of the error reduction factor is above 0.5. This does not occur in the other cases, especially for $\kappa=10^{-7}$, where the error reduction factor is one order of magnitude smaller than in the other two cases. This explains why the number of iterations decreases significantly for smaller values of $\kappa$.

\begin{table}[!ht]
    \centering
    \begin{tabular}{c | c c c |c c | c}
    \hline
    Case & $\kappa$ &$\varepsilon$ & $M_1^{1,\mathrm{bl}}$ & $\alpha_\FF $ & $\alpha_\PM$ & \#~iterations \\
\hline
    1 &  $10^{-2}$ & $10^{-2}$ & $10^{-4}$ & $9.33 \times 10^0$    & $2.14\times 10^1$ & 19 \\
    2 & $10^{-3}$ & $10^{-2}$ & $10^{-4}$ & $4.07\times 10^1$ & $4.92\times 10^1$ & 20 \\
    3 &  $10^{-5}$ & $10^{-2}$ & $10^{-4}$ & $6.78\times 10^2$ & $2.95\times 10^2$ & 16 \\
    4 &  $10^{-7}$ & $10^{-2}$ & $10^{-4}$ & $4.00\times 10^4$ & $5.00\times 10^2$ &  8 \\
     \hline
    5 &  $10^{-5}$ & $10^{-1}$ & $10^{-4}$ & $6.78\times 10^2$ & $2.95\times 10^2$ & 16 \\
    6 &  $10^{-5}$ & $10^{-2}$ & $10^{-4}$ & $6.78\times 10^2$ & $2.95\times 10^2$ & 16 \\
    7 &  $10^{-5}$ & $10^{-3}$ & $10^{-4}$ & $6.78\times 10^2$ & $2.95\times 10^2$ & 16 \\
     \hline
    8 & $10^{-5}$ & $10^{-2}$ & $10^{-3}$ & $6.78\times 10^2$ & $2.95\times 10^2$ & 16 \\
    9 & $10^{-5}$ & $10^{-2}$ & $10^{-4}$ & $6.78\times 10^2$ & $2.95\times 10^2$ & 16 \\
    \hline
    \end{tabular}
\caption{Optimal parameters $\alpha_\FF$ and $\alpha_\PM$ and number of iteration steps for different values of parameters $\kappa,\, \varepsilon$ and $M_1^{1,\mathrm{bl}}$ with $h=0.0125$ and $N_1^{\mathrm{bl}} = 10^{-2}$}
\label{tab:results-case-2b}
\end{table}

\begin{figure}
    \includegraphics[width=0.32\textwidth]{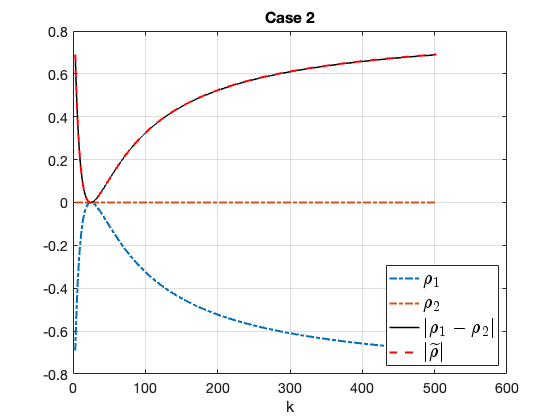}
    \includegraphics[width=0.32\textwidth]{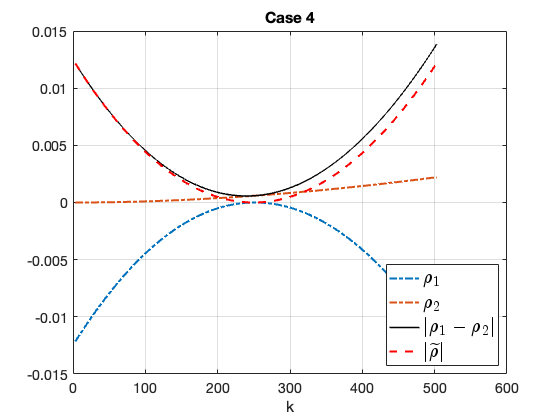}
    \includegraphics[width=0.32\textwidth]{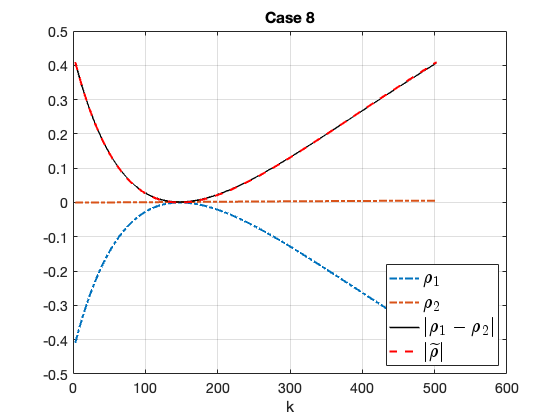}
\caption{Error reduction factors versus relevant frequencies $k$ for the values of the coefficients corresponding to Case 2 (left), Case 4 (middle) and Case 8 (right) in Table~\ref{tab:results-case-2b}}
\label{fig:convergenceFactorsTest2a}
\end{figure}

\section{Discussion}\label{sec:discussion}

In this work, we develop and analyze an optimized Schwarz method for the steady-state Stokes--Darcy problem with generalized interface conditions. These coupling conditions have been recently developed using homogenization and boundary layer theory and are applicable for flows with arbitrary direction at the fluid--porous interface. The work extends the previous results~\cite{Discacciati_Gerardo-Giorda_18}, that were valid only for parallel flows to the interface, to coupled flow systems with general flow directions.

We conduct the convergence analysis in the Fourier space and compute optimal Robin parameters. We study the performance of the developed method with respect to the mesh size and with respect to the physical parameters appearing in the model and in the generalized interface conditions. For this purpose, we consider two different test cases: one with the analytical solution used in our previous work on well-posedness of the coupled model, and one where the flow has arbitrary direction at the fluid--porous interface. The developed method is highly robust with respect to the mesh size, boundary layer coefficients and scale separation parameter appearing in the generalized coupling conditions. The method demonstrates a moderate robustness with respect to the intrinsic permeability such that we get less iterations for the smaller permeability values. This is due to the fact that in such situations the error reduction factor of the Robin--Robin method is much smaller than for higher permeability values.

\section*{Acknowledgement}
The work is funded by the Deutsche Forschungsgemeinschaft (DFG, German Research Foundation) – Project Number 327154368 – SFB 1313 and by the EPSRC grant EP/V027603/1.




\bibliographystyle{elsarticle-harv}
\bibliography{bibliography}

\begin{thebibliography}{42}
\expandafter\ifx\csname natexlab\endcsname\relax\def\natexlab#1{#1}\fi
\providecommand{\url}[1]{\texttt{#1}}
\providecommand{\href}[2]{#2}
\providecommand{\path}[1]{#1}
\providecommand{\DOIprefix}{doi:}
\providecommand{\ArXivprefix}{arXiv:}
\providecommand{\URLprefix}{URL: }
\providecommand{\Pubmedprefix}{pmid:}
\providecommand{\doi}[1]{\href{http://dx.doi.org/#1}{\path{#1}}}
\providecommand{\Pubmed}[1]{\href{pmid:#1}{\path{#1}}}
\providecommand{\bibinfo}[2]{#2}
\ifx\xfnm\relax \def\xfnm[#1]{\unskip,\space#1}\fi
\bibitem[{Ahmed and Bottaro(2024)}]{Ahmed-bottaro-24}
\bibinfo{author}{Ahmed, E.}, \bibinfo{author}{Bottaro, A.}, \bibinfo{year}{2024}.
\newblock \bibinfo{title}{Laminar flow in a channel bounded by porous/rough walls: revisiting {B}eavers-{J}oseph-{S}affman}.
\newblock \bibinfo{journal}{Eur. J. Mech. B Fluids} \bibinfo{volume}{103}, \bibinfo{pages}{269--283}.
\newblock \DOIprefix\doi{10.1016/j.euromechflu.2023.10.012}.
\bibitem[{Angot et~al.(2017)Angot, Goyeau and Ochoa-Tapia}]{Angot_etal_17}
\bibinfo{author}{Angot, P.}, \bibinfo{author}{Goyeau, B.}, \bibinfo{author}{Ochoa-Tapia, J.A.}, \bibinfo{year}{2017}.
\newblock \bibinfo{title}{Asymptotic modeling of transport phenomena at the interface between a fluid and a porous layer: jump conditions}.
\newblock \bibinfo{journal}{Phys. Rev. E} \bibinfo{volume}{95}, \bibinfo{pages}{063302}.
\newblock \DOIprefix\doi{10.1103/PhysRevE.95.063302}.
\bibitem[{Beavers and Joseph(1967)}]{Beavers_Joseph_67}
\bibinfo{author}{Beavers, G.S.}, \bibinfo{author}{Joseph, D.D.}, \bibinfo{year}{1967}.
\newblock \bibinfo{title}{Boundary conditions at a naturally permeable wall}.
\newblock \bibinfo{journal}{J. Fluid Mech.} \bibinfo{volume}{30}, \bibinfo{pages}{197--207}.
\newblock \DOIprefix\doi{10.1017/S0022112067001375}.
\bibitem[{Caiazzo et~al.(2014)Caiazzo, John and Wilbrandt}]{Caiazzo_etal_2014}
\bibinfo{author}{Caiazzo, A.}, \bibinfo{author}{John, V.}, \bibinfo{author}{Wilbrandt, U.}, \bibinfo{year}{2014}.
\newblock \bibinfo{title}{On classical iterative subdomain methods for the {S}tokes--{D}arcy problem}.
\newblock \bibinfo{journal}{Comput. Geosci.} \bibinfo{volume}{18}, \bibinfo{pages}{711--728}.
\newblock \DOIprefix\doi{10.1007/s10596-014-9418-y}.
\bibitem[{Cao et~al.(2011)Cao, Gunzburger, He and Wang}]{Cao_etal_2011}
\bibinfo{author}{Cao, Y.}, \bibinfo{author}{Gunzburger, M.}, \bibinfo{author}{He, X.}, \bibinfo{author}{Wang, X.}, \bibinfo{year}{2011}.
\newblock \bibinfo{title}{Robin–{R}obin domain decomposition methods for the steady-state {S}tokes--{Da}rcy system with the {B}eavers--{J}oseph interface condition}.
\newblock \bibinfo{journal}{Numer. Math.} \bibinfo{volume}{117}, \bibinfo{pages}{601--629}.
\newblock \DOIprefix\doi{10.1007/s00211-011-0361-8}.
\bibitem[{Cao et~al.(2014)Cao, Gunzburger, He and Wang}]{Cao:2014:PNI}
\bibinfo{author}{Cao, Y.}, \bibinfo{author}{Gunzburger, M.}, \bibinfo{author}{He, X.}, \bibinfo{author}{Wang, X.}, \bibinfo{year}{2014}.
\newblock \bibinfo{title}{Parallel, non-iterative, multi-physics domain decomposition methods for time-dependent {S}tokes-{D}arcy systems}.
\newblock \bibinfo{journal}{Math. Comput.} \bibinfo{volume}{83}, \bibinfo{pages}{1617--1644}.
\bibitem[{Carraro et~al.(2015)Carraro, Goll, Marciniak-Czochra and Mikeli\'{c}}]{Carraro_etal_15}
\bibinfo{author}{Carraro, T.}, \bibinfo{author}{Goll, C.}, \bibinfo{author}{Marciniak-Czochra, A.}, \bibinfo{author}{Mikeli\'{c}, A.}, \bibinfo{year}{2015}.
\newblock \bibinfo{title}{Effective interface conditions for the forced infiltration of a viscous fluid into a porous medium using homogenization}.
\newblock \bibinfo{journal}{Comput. Methods Appl. Mech. Engrg.} \bibinfo{volume}{292}, \bibinfo{pages}{195--220}.
\newblock \DOIprefix\doi{10.1016/j.cma.2014.10.050}.
\bibitem[{Chen et~al.(2011)Chen, Gunzburger, Hua and Wang}]{Chen_etal_2011}
\bibinfo{author}{Chen, W.}, \bibinfo{author}{Gunzburger, M.}, \bibinfo{author}{Hua, F.}, \bibinfo{author}{Wang, X.}, \bibinfo{year}{2011}.
\newblock \bibinfo{title}{A parallel {R}obin-{R}obin domain decomposition method for the {S}tokes--{D}arcy system}.
\newblock \bibinfo{journal}{{SIAM} J. Numer. Anal.} \bibinfo{volume}{49}, \bibinfo{pages}{1064--1084}.
\newblock \DOIprefix\doi{10.1137/080740556}.
\bibitem[{Chen et~al.(2021)Chen, Gander and Xu}]{Chen:2021:OSM}
\bibinfo{author}{Chen, X.}, \bibinfo{author}{Gander, M.J.}, \bibinfo{author}{Xu, Y.}, \bibinfo{year}{2021}.
\newblock \bibinfo{title}{Optimized {S}chwarz methods with elliptical domain decompositions}.
\newblock \bibinfo{journal}{J. Sci. Comput.} \bibinfo{volume}{86}, \bibinfo{pages}{1--28}.
\newblock \DOIprefix\doi{10.1007/s10915-020-01394-8}.
\bibitem[{Discacciati(2004)}]{Discacciati_2004}
\bibinfo{author}{Discacciati, M.}, \bibinfo{year}{2004}.
\newblock \bibinfo{title}{Domain {D}ecomposition {M}ethods for the {C}oupling of {S}urface and {G}roundwater {F}lows}.
\newblock Ph.D. thesis. EPFL Lausanne.
\bibitem[{Discacciati and Gerardo-Giorda(2018)}]{Discacciati_Gerardo-Giorda_18}
\bibinfo{author}{Discacciati, M.}, \bibinfo{author}{Gerardo-Giorda, L.}, \bibinfo{year}{2018}.
\newblock \bibinfo{title}{Optimized {S}chwarz methods for the {S}tokes--{D}arcy coupling}.
\newblock \bibinfo{journal}{IMA J. Numer. Anal.} \bibinfo{volume}{38}, \bibinfo{pages}{1959--1983}.
\newblock \DOIprefix\doi{10.1093/imanum/drx054}.
\bibitem[{Discacciati and Quarteroni(2009)}]{Discacciati_Quarteroni_09}
\bibinfo{author}{Discacciati, M.}, \bibinfo{author}{Quarteroni, A.}, \bibinfo{year}{2009}.
\newblock \bibinfo{title}{Navier--{S}tokes/{D}arcy coupling: modeling, analysis, and numerical approximation}.
\newblock \bibinfo{journal}{Rev. Mat. Complut.} \bibinfo{volume}{22}, \bibinfo{pages}{315--426}.
\newblock \DOIprefix\doi{10.5209/rev\_REMA.2009.v22.n2.16263}.
\bibitem[{Discacciati et~al.(2007)Discacciati, Quarteroni and Valli}]{Discacciati_etal_2007}
\bibinfo{author}{Discacciati, M.}, \bibinfo{author}{Quarteroni, A.}, \bibinfo{author}{Valli, A.}, \bibinfo{year}{2007}.
\newblock \bibinfo{title}{Robin--{R}obin domain decomposition methods for the {S}tokes--{D}arcy coupling}.
\newblock \bibinfo{journal}{SIAM J. Numer. Anal.} \bibinfo{volume}{45}, \bibinfo{pages}{1246--1268}.
\newblock \DOIprefix\doi{10.1137/06065091X}.
\bibitem[{Discacciati and Vanzan(2024)}]{Discacciati:2024:IMAJNA}
\bibinfo{author}{Discacciati, M.}, \bibinfo{author}{Vanzan, T.}, \bibinfo{year}{2024}.
\newblock \bibinfo{title}{Optimized {S}chwarz methods for the time-dependent {S}tokes-{D}arcy coupling}.
\newblock \bibinfo{journal}{IMA J. Numer. Anal.} \bibinfo{volume}{44}, \bibinfo{pages}{2251--2276}.
\newblock \DOIprefix\doi{10.1093/imanum/drad057}.
\bibitem[{Dolean et~al.(2009)Dolean, Gander and {Gerardo-Giorda}}]{LGG:SISC:2009}
\bibinfo{author}{Dolean, V.}, \bibinfo{author}{Gander, M.J.}, \bibinfo{author}{{Gerardo-Giorda}, L.}, \bibinfo{year}{2009}.
\newblock \bibinfo{title}{Optimized {S}chwarz methods for {M}axwell's equations}.
\newblock \bibinfo{journal}{SIAM J. Sci. Comput.} \bibinfo{volume}{31}, \bibinfo{pages}{2193--2213}.
\newblock \DOIprefix\doi{10.1137/080728536}.
\bibitem[{Eggenweiler et~al.(2022)Eggenweiler, Discacciati and Rybak}]{Eggenweiler_Rybak_Discacciati_21}
\bibinfo{author}{Eggenweiler, E.}, \bibinfo{author}{Discacciati, M.}, \bibinfo{author}{Rybak, I.}, \bibinfo{year}{2022}.
\newblock \bibinfo{title}{Analysis of the stokes--darcy problem with generalised interface conditions}.
\newblock \bibinfo{journal}{ESAIM Math. Model. Numer. Anal.} \bibinfo{volume}{56}, \bibinfo{pages}{727--742}.
\newblock \DOIprefix\doi{10.1051/m2an/2022025}.
\bibitem[{Eggenweiler and Rybak(2020)}]{Eggenweiler_Rybak_20}
\bibinfo{author}{Eggenweiler, E.}, \bibinfo{author}{Rybak, I.}, \bibinfo{year}{2020}.
\newblock \bibinfo{title}{Unsuitability of the {B}eavers--{J}oseph interface condition for filtration problems}.
\newblock \bibinfo{journal}{J. Fluid Mech.} \bibinfo{volume}{892}, \bibinfo{pages}{A10}.
\newblock \DOIprefix\doi{10.1017/jfm.2020.194}.
\bibitem[{Eggenweiler and Rybak(2021)}]{Eggenweiler_Rybak_21}
\bibinfo{author}{Eggenweiler, E.}, \bibinfo{author}{Rybak, I.}, \bibinfo{year}{2021}.
\newblock \bibinfo{title}{Effective coupling conditions for arbitrary flows in {S}tokes--{D}arcy systems}.
\newblock \bibinfo{journal}{Multiscale Model. Simul.} \bibinfo{volume}{19}, \bibinfo{pages}{731--757}.
\newblock \DOIprefix\doi{10.1137/20M1346638}.
\bibitem[{Feng et~al.(2012)Feng, He, Wang and Zhang}]{Feng:2012:AMC}
\bibinfo{author}{Feng, W.}, \bibinfo{author}{He, X.}, \bibinfo{author}{Wang, Z.}, \bibinfo{author}{Zhang, X.}, \bibinfo{year}{2012}.
\newblock \bibinfo{title}{Non-iterative domain decomposition methods for a non-stationary {S}tokes-{D}arcy model with {B}eavers-{J}oseph interface conditions}.
\newblock \bibinfo{journal}{Appl. Math. Comput.} \bibinfo{volume}{219}, \bibinfo{pages}{453--463}.
\newblock \DOIprefix\doi{10.1016/j.amc.2012.05.012}.
\bibitem[{Galvis and Sarkis(2007)}]{Galvis:2006:BDD}
\bibinfo{author}{Galvis, J.}, \bibinfo{author}{Sarkis, M.}, \bibinfo{year}{2007}.
\newblock \bibinfo{title}{Balancing domain decomposition methods for mortar coupling {S}tokes-{D}arcy systems}, in: \bibinfo{editor}{Widlund, O.}, \bibinfo{editor}{Keyes, D.} (Eds.), \bibinfo{booktitle}{Domain Decomposition Methods in Science and Engineering XVI}, \bibinfo{publisher}{Springer}, \bibinfo{address}{Berlin and Heidelberg}. pp. \bibinfo{pages}{373--380}.
\newblock \DOIprefix\doi{10.1007/978-3-540-34469-8\_46}.
\bibitem[{Galvis and Sarkis(2010)}]{Galvis:2007:FBP}
\bibinfo{author}{Galvis, J.}, \bibinfo{author}{Sarkis, M.}, \bibinfo{year}{2010}.
\newblock \bibinfo{title}{F{ETI} and {BDD} preconditioners for {S}tokes-{M}ortar-{D}arcy systems}.
\newblock \bibinfo{journal}{Commun. Appl. Math. Comput. Sci.} \bibinfo{volume}{5}, \bibinfo{pages}{1--30}.
\newblock \DOIprefix\doi{10.2140/camcos.2010.5.1}.
\bibitem[{Gander and Vanzan(2020a)}]{gander:MOSM}
\bibinfo{author}{Gander, M.}, \bibinfo{author}{Vanzan, T.}, \bibinfo{year}{2020}a.
\newblock \bibinfo{title}{Multilevel optimized {S}chwarz methods}.
\newblock \bibinfo{journal}{SIAM J. Sci. Comput.} \bibinfo{volume}{42}, \bibinfo{pages}{A3180--A3209}.
\newblock \DOIprefix\doi{10.1137/19M1259389}.
\bibitem[{Gander(2006)}]{Gander:2006:OSM}
\bibinfo{author}{Gander, M.J.}, \bibinfo{year}{2006}.
\newblock \bibinfo{title}{Optimized {S}chwarz methods}.
\newblock \bibinfo{journal}{SIAM J. Numer. Anal.} \bibinfo{volume}{44}, \bibinfo{pages}{699--731}.
\newblock \DOIprefix\doi{10.1137/S0036142903425409}.
\bibitem[{Gander and Vanzan(2019)}]{gander2019heterogeneous}
\bibinfo{author}{Gander, M.J.}, \bibinfo{author}{Vanzan, T.}, \bibinfo{year}{2019}.
\newblock \bibinfo{title}{Heterogeneous optimized {S}chwarz methods for second order elliptic {PDE}s}.
\newblock \bibinfo{journal}{SIAM J. Sci. Comput.} \bibinfo{volume}{41}, \bibinfo{pages}{A2329--A2354}.
\newblock \DOIprefix\doi{10.1137/18M122114X}.
\bibitem[{Gander and Vanzan(2020b)}]{Gander_Vanzan_2020}
\bibinfo{author}{Gander, M.J.}, \bibinfo{author}{Vanzan, T.}, \bibinfo{year}{2020}b.
\newblock \bibinfo{title}{On the derivation of optimized transmission conditions for the {S}tokes-{D}arcy coupling}, in: \bibinfo{editor}{Haynes, R.}, \bibinfo{editor}{MacLachlan, S.}, \bibinfo{editor}{Cai, X.C.}, \bibinfo{editor}{Halpern, L.}, \bibinfo{editor}{Kim, H.H.}, \bibinfo{editor}{Klawonn, A.}, \bibinfo{editor}{Widlund, O.} (Eds.), \bibinfo{booktitle}{Domain Decomposition Methods in Science and Engineering XXV}, \bibinfo{publisher}{Springer International Publishing}, \bibinfo{address}{Cham}. pp. \bibinfo{pages}{491--498}.
\newblock \DOIprefix\doi{10.1007/978-3-030-56750-7\_57}.
\bibitem[{Gander and Xu(2016)}]{Gander_XU:2016:OSM}
\bibinfo{author}{Gander, M.J.}, \bibinfo{author}{Xu, Y.}, \bibinfo{year}{2016}.
\newblock \bibinfo{title}{Optimized {S}chwarz methods for model problems with continuously variable coefficients}.
\newblock \bibinfo{journal}{SIAM Journal on Scientific Computing} \bibinfo{volume}{38}, \bibinfo{pages}{A2964--A2986}.
\newblock \DOIprefix\doi{10.1137/15M1053943}.
\bibitem[{Gander and Zhang(2019)}]{Gander:2019:SIREV}
\bibinfo{author}{Gander, M.J.}, \bibinfo{author}{Zhang, H.}, \bibinfo{year}{2019}.
\newblock \bibinfo{title}{A class of iterative solvers for the {H}elmholtz equation: Factorizations, sweeping preconditioners, source transfer, single layer potentials, polarized traces, and optimized {S}chwarz methods}.
\newblock \bibinfo{journal}{SIAM Review} \bibinfo{volume}{61}, \bibinfo{pages}{3--76}.
\newblock \DOIprefix\doi{10.1137/16M109781X}.
\bibitem[{Gigante et~al.(2020)Gigante, Sambataro and Vergara}]{Gigante:2020:OSM}
\bibinfo{author}{Gigante, G.}, \bibinfo{author}{Sambataro, G.}, \bibinfo{author}{Vergara, C.}, \bibinfo{year}{2020}.
\newblock \bibinfo{title}{Optimized {S}chwarz methods for spherical interfaces with application to fluid-structure interaction}.
\newblock \bibinfo{journal}{SIAM J. Sci. Comput.} \bibinfo{volume}{42}, \bibinfo{pages}{A751--A770}.
\newblock \DOIprefix\doi{10.1137/19M1272184}.
\bibitem[{Gigante and Vergara(2016)}]{Gigante:2021:OSM}
\bibinfo{author}{Gigante, G.}, \bibinfo{author}{Vergara, C.}, \bibinfo{year}{2016}.
\newblock \bibinfo{title}{Optimized {S}chwarz method for the fluid-structure interaction with cylindrical interfaces}, in: \bibinfo{booktitle}{Domain Decomposition Methods in Science and Engineering XXII}, \bibinfo{publisher}{Springer}. pp. \bibinfo{pages}{521--529}.
\newblock \DOIprefix\doi{10.1007/978-3-319-18827-0\_53}.
\bibitem[{He et~al.(2015)He, Li, Lin and Ming}]{He_etal_2015}
\bibinfo{author}{He, X.}, \bibinfo{author}{Li, J.}, \bibinfo{author}{Lin, Y.}, \bibinfo{author}{Ming, J.}, \bibinfo{year}{2015}.
\newblock \bibinfo{title}{A domain decomposition method for the steady-state {N}avier--{S}tokes--{D}arcy model with {B}eavers--{J}oseph interface condition}.
\newblock \bibinfo{journal}{{SIAM} J. Sci. Comput.} \bibinfo{volume}{37}, \bibinfo{pages}{S264--S290}.
\newblock \DOIprefix\doi{10.1137/140965776}.
\bibitem[{L\={a}cis and Bagheri(2017)}]{Lacis_Bagheri_17}
\bibinfo{author}{L\={a}cis, U.}, \bibinfo{author}{Bagheri, S.}, \bibinfo{year}{2017}.
\newblock \bibinfo{title}{A framework for computing effective boundary conditions at the interface between free fluid and a porous medium}.
\newblock \bibinfo{journal}{J. Fluid Mech.} \bibinfo{volume}{812}, \bibinfo{pages}{866--889}.
\newblock \DOIprefix\doi{10.1017/jfm.2016.838}.
\bibitem[{L\={a}cis et~al.(2020)L\={a}cis, Sudhakar, Pasche and Bagheri}]{Lacis_etal_20}
\bibinfo{author}{L\={a}cis, U.}, \bibinfo{author}{Sudhakar, Y.}, \bibinfo{author}{Pasche, S.}, \bibinfo{author}{Bagheri, S.}, \bibinfo{year}{2020}.
\newblock \bibinfo{title}{Transfer of mass and momentum at rough and porous surfaces}.
\newblock \bibinfo{journal}{J. Fluid Mech.} \bibinfo{volume}{884}, \bibinfo{pages}{A21}.
\newblock \DOIprefix\doi{10.1017/jfm.2019.897}.
\bibitem[{Liu et~al.(2022)Liu, Boubendir, He and He}]{Liu_etal_2022}
\bibinfo{author}{Liu, Y.}, \bibinfo{author}{Boubendir, Y.}, \bibinfo{author}{He, X.}, \bibinfo{author}{He, Y.}, \bibinfo{year}{2022}.
\newblock \bibinfo{title}{New optimized {R}obin--{R}obin domain decomposition methods using {K}rylov solvers for the {S}tokes--{D}arcy system}.
\newblock \bibinfo{journal}{{SIAM} J. Sci. Comput.} \bibinfo{volume}{44}, \bibinfo{pages}{B1068--B1095}.
\newblock \DOIprefix\doi{10.1137/21M1417223}.
\bibitem[{Liu et~al.(2021)Liu, He, Li and He}]{Liu_etal_2021}
\bibinfo{author}{Liu, Y.}, \bibinfo{author}{He, Y.}, \bibinfo{author}{Li, X.}, \bibinfo{author}{He, X.M.}, \bibinfo{year}{2021}.
\newblock \bibinfo{title}{A novel convergence analysis of {R}obin--{R}obin domain decomposition method for {S}tokes--{D}arcy system with {B}eavers--{J}oseph interface condition}.
\newblock \bibinfo{journal}{Appl. Math. Lett.} \bibinfo{volume}{119}, \bibinfo{pages}{107181}.
\newblock \DOIprefix\doi{10.1016/j.aml.2021.107181}.
\bibitem[{Loula et~al.(1995)Loula, Rochinha and Murad}]{Loula:1995:CMAME}
\bibinfo{author}{Loula, A.}, \bibinfo{author}{Rochinha, F.}, \bibinfo{author}{Murad, M.}, \bibinfo{year}{1995}.
\newblock \bibinfo{title}{Higher-order gradient post-processings for second-order elliptic problems}.
\newblock \bibinfo{journal}{Comput. Methods Appl. Mech. Engrg.} \bibinfo{volume}{128}, \bibinfo{pages}{361--381}.
\newblock \DOIprefix\doi{10.1016/0045-7825(95)00885-3}.
\bibitem[{Naqvi and Bottaro(2021)}]{Naqvi_Bottaro_2021}
\bibinfo{author}{Naqvi, S.B.}, \bibinfo{author}{Bottaro, A.}, \bibinfo{year}{2021}.
\newblock \bibinfo{title}{Interfacial conditions between a free-fluid region and a porous medium}.
\newblock \bibinfo{journal}{Int. J. Multiph. Flow} \bibinfo{volume}{141}, \bibinfo{pages}{103585}.
\newblock \DOIprefix\doi{10.1016/j.ijmultiphaseflow.2021.103585}.
\bibitem[{Quarteroni and Valli(1999)}]{Quarteroni_Valli_1999}
\bibinfo{author}{Quarteroni, A.}, \bibinfo{author}{Valli, A.}, \bibinfo{year}{1999}.
\newblock \bibinfo{title}{Domain Decomposition Methods for Partial Differential Equationsa}.
\newblock \bibinfo{publisher}{Clarendon Press}, \bibinfo{address}{New York}.
\bibitem[{Saffman(1971)}]{Saffman_71}
\bibinfo{author}{Saffman, P.G.}, \bibinfo{year}{1971}.
\newblock \bibinfo{title}{On the boundary condition at the surface of a porous medium}.
\newblock \bibinfo{journal}{Stud. Appl. Math.} \bibinfo{volume}{50}, \bibinfo{pages}{93--101}.
\newblock \DOIprefix\doi{10.1002/sapm197150293}.
\bibitem[{Strohbeck et~al.(2023)Strohbeck, Eggenweiler and Rybak}]{Strohbeck-Eggenweiler-Rybak-23}
\bibinfo{author}{Strohbeck, P.}, \bibinfo{author}{Eggenweiler, E.}, \bibinfo{author}{Rybak, I.}, \bibinfo{year}{2023}.
\newblock \bibinfo{title}{A modification of the {B}eavers--{J}oseph condition for arbitrary flows to the fluid--porous interface}.
\newblock \bibinfo{journal}{Transp. Porous Media} \bibinfo{volume}{147}, \bibinfo{pages}{605--628}.
\newblock \DOIprefix\doi{10.1007/s11242-023-01919-3}.
\bibitem[{Sudhakar et~al.(2021)Sudhakar, Lacis, Pasche and Bagheri}]{Sudhakar_21}
\bibinfo{author}{Sudhakar, Y.}, \bibinfo{author}{Lacis, U.}, \bibinfo{author}{Pasche, S.}, \bibinfo{author}{Bagheri, S.}, \bibinfo{year}{2021}.
\newblock \bibinfo{title}{Higher-order homogenized boundary conditions for flows over rough and porous surfaces}.
\newblock \bibinfo{journal}{Transp. Porous Media} \bibinfo{volume}{136}, \bibinfo{pages}{1--42}.
\newblock \DOIprefix\doi{10.1007/s11242-020-01495-w}.
\bibitem[{Vassilev et~al.(2014)Vassilev, Wang and Yotov}]{Vassilev_etal_2014}
\bibinfo{author}{Vassilev, D.}, \bibinfo{author}{Wang, C.}, \bibinfo{author}{Yotov, I.}, \bibinfo{year}{2014}.
\newblock \bibinfo{title}{Domain decomposition for coupled {S}tokes and {D}arcy flows}.
\newblock \bibinfo{journal}{Comput. Methods Appl. Mech. Engrg.} \bibinfo{volume}{268}, \bibinfo{pages}{264--283}.
\newblock \DOIprefix\doi{10.1016/j.cma.2013.09.009}.
\bibitem[{Zampogna and Bottaro(2016)}]{Zampogna_Bottaro_16}
\bibinfo{author}{Zampogna, G.A.}, \bibinfo{author}{Bottaro, A.}, \bibinfo{year}{2016}.
\newblock \bibinfo{title}{Fluid flow over and through a regular bundle of rigid fibres}.
\newblock \bibinfo{journal}{J. Fluid Mech.} \bibinfo{volume}{792}, \bibinfo{pages}{5--35}.
\newblock \DOIprefix\doi{10.1017/jfm.2016.66}.

\end{thebibliography}



\end{document}